\newtheorem{thm}{Theorem}[section]
\newtheorem{defn}[thm]{Definition}
\newtheorem{prop}[thm]{Proposition}
\begin{document}

\title[Characterization of multiframelet set]{Characterization of multiframelet set on local fields of positive characteristic}

\author[D Haldar]{Debasis Haldar}

\address{Department of Mathematics\\ NIT Rourkela\\ Rourkela 769008\\ India}

\email{debamath.haldar@gmail.com}



\subjclass[2010]{Primary 42C15, 43A70; Secondary 42C40}

\keywords{local field, besselet, multiframelet, multiframelet set, super-wavelet}

\begin{abstract}
This paper presents a discussion on multiframelet set, multiwavelet set and set correspond to super wavelet on local fields of positive characteristic. We characterize Parseval multiframelet set and give equivalent conditions multiwavelet set holds. Furthurmore, we characterize MRA multiwavelet set with the help of dimension function.  
\end{abstract}

\maketitle

\section{Introduction}

Motivated from the recent works of Behara and Jahan \cite{BJ15, BJ12}, Shukla and Maury \cite{SM17}, our aim is to develop the theory of multiwavelet set, scaling set, Parseval scaling set, Parseval framelet set in local field of positive characteristic(LFPC).

J Benedetto and R Benedetto has discussed the existance of wavelet sets for locally compact abelian group (LCAG) and related groups in \cite{BB04,B04} and Curry and Mayeli \cite{CM11} has discussed same. We characterize finite ordered multiwavelet set in LFPC. We also give some equivalent conditions of finite ordered multiwavelet set in LFPC. Moreover, we give characterization of Parseval frame set and equivalent condition of set induced from a Besselet family with bound 1.

The dimension function of multiwavelets was introduced and studied by Auschar \cite{A95}. Later Bownik et al. \cite{BRS01} have studied this for MRA and non-MRA wavelets over $L^{2}(\mathbb{R})$.

The remainder of the paper is structured as follows. In `Preliminaries' section, we study algebric and topological property of the local fields and multiframelets needed throughout the paper. We characterize Parseval frame set and give some equivalent condition of multiwavelet set in section `Multiframelet set and multiwavelet sets'. In section `Multiresolution analysis', we discuss about scaling set and MRA multiwavelet set. Finally in `Super-wavelet' section, we discuss about decomposable Parseval framelet and equivalency of two Parseval frame super-wavelet.

\section{Preliminaries}
We will discuss in this section some preliminary results required for rest of this article.

\subsection{Local fields}
Throughout this article, $\mathfrak{L}$ denotes the set $\{1, \ldots, L\}$, $\mathds{1}_S$ denotes characteristic function of the set $S$, $K$ denotes a local field and a topological space. By a locally compact field or, in short, a local field we mean a locally compact, complete, non-discrete and totally disconnected field. $K^*$ and $K^+$ denote as the multiplicative and additive groups of $K$ respectively.\\

Readers are referred to \cite{RV99,T75} for proofs of all the results stated in this section. The valuation $|x|$ of $x \in K$ satisfies the following properties
\begin{enumerate}[label=(\roman*)]
	\item $|x| \geq 0$ and $|x|=0$ if and only if $x=0$;
	\item $|xy|=|x||y|$;
	\item $|x+y| \leq$ max $\{|x|,|y|\}$. $|x+y|=\{|x|,|y|\}$ if $|x| \neq |y|$.
\end{enumerate}
Property (iii) is called the ultrametric inequality and hence $K$ is an non-Archimedean field. Define $\mathfrak{B} =\{x\in K : |x|<1\}$ and $\mathfrak{D} =\{x\in K : |x|\leq 1\}$. Then they both are compact and open. $\mathfrak{B}$ is the prime ideal in $K$ and the ring of integers $\mathfrak{D}$ is the unique maximal compact subring of $K$. \\

The set of valuation of the elements of $K$ is a discrete set of the form $\{v^s : s \in \mathbb{Z}\} \cup \{0\}$ for some positive $v$ as $K$ is totally disconnected. Let $\mathfrak{p}$ be an element of maximum valuation in $\mathfrak{B}$. Then $\mathfrak{B}=\langle \mathfrak{p}\rangle =\mathfrak{p}\mathfrak{D}$. It is not difficult to see that $\mathfrak{D}$ is compact and open, hence $\mathfrak{B}$ is so. Therefore $\mathfrak{D}/\mathfrak{B} \cong GF(q)$, where $q=p^c$ for some prime $p$ and $c \in \mathbb{N}$ as $GF(q)$ is a vector space of $c$-dimension over $GF(p)$. Let $\mathfrak{B}^s =\mathfrak{p}^s \mathfrak{D} = \{x \in K : |x| \leq q^{-s}\}$ for $s \in \mathbb{Z}$. These $\mathfrak{B}^s$ are said to be fractional ideals which are also compact open subgroup of $K^+$.\\

 $K^+$ is being a LCAG, one may choose a Haar measure $dx$ for $K^+$. $d(\alpha x)$ is also a Haar measure and $d(\alpha x)=|\alpha|dx$ for $\alpha \in K \setminus \{0\}$. This Haar measure is normalized by $\int \limits_{K} \mathds{1}_{\mathfrak{D}}(x) dx =1$. If $dx$ is a Haar measure on $K^+$, then Haar measure on $K^*$ is given by $\frac{dx}{|x|}$. It is easy to see the following
\begin{itemize}
	\item $|\mathfrak{p}|=q^{-1} =\mu(\mathfrak{B})$.
	\item valuation set of $K$ is $\{q^s : s \in \mathbb{Z}\} \cup \{0\}$.
\end{itemize}
The Hilbert space of all complex-valued functions on $K$, square integrable with respect to the
measure $dx$, is denoted by $L^{2}(K)$. The inner product in this space is given by $$\langle f,g \rangle = \int \limits_{K} f(x)\overline{g(x)} dx~~~~~ \text{  for } f,g \in L^{2}(K).$$

Any $x \in \mathfrak{B}^s$ can be uniquely expressed as $$x=\sum \limits_{n=s}^\infty a_n \mathfrak{p}^n ,~~~~~~~~~ a_n \in \mathfrak{U},~ a_s \neq 0$$ where $\mathfrak{U}=\{c_0, c_1, \ldots, c_{q-1}\}$ is a fixed full set of coset representatives of $\mathfrak{B}$ in $\mathfrak{D}$. In this case, $|x|=q^{-s}$. As $K$ is a local field, consider $\chi$ be a fixed non-trivial, unitary, continuous character on $K^+$ which is trivial on $\mathfrak{D}$ but non-trivial on $\mathfrak{B}^{-1}$. Fix $y \in K$, we define $\chi_y (x)=\chi(yx)$ for $x\in K$. This character satisfy following important result given by Taibleson \cite{T75}.
\begin{prop}
	If $\{u(n)\}_{n=0}^\infty$ is complete list of distinct coset representatives of $\mathfrak{D}$ in $K^+$, then $\{\chi_{u(n)}\}_{n=0}^\infty$ is a complete list of distinct characters on $\mathfrak{D}$. Further, it is a complete orthonormal system on $\mathfrak{D}$.
\end{prop}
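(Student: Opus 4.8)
The plan is to reduce everything to two facts about the fixed character $\chi$: its ``conductor'' is exactly $\mathfrak{D}$, and there are enough characters of the compact group $\mathfrak{D}$. First I would record the annihilator computation: for $y\in K$ one has $\chi_y\equiv 1$ on $\mathfrak{D}$ if and only if $y\in\mathfrak{D}$. The ``if'' direction is immediate, since $\mathfrak{D}$ is a ring, so $yx\in\mathfrak{D}$ whenever $y,x\in\mathfrak{D}$, and $\chi$ is trivial on $\mathfrak{D}$. For the converse, note that by multiplicativity of the valuation $y\mathfrak{D}=\{z\in K:|z|\le|y|\}$; if $|y|=q^{s}$ with $s\ge 1$ then $y\mathfrak{D}=\mathfrak{B}^{-s}\supseteq\mathfrak{B}^{-1}$, so $\chi\equiv 1$ on $\mathfrak{D}$ would force $\chi\equiv 1$ on $\mathfrak{B}^{-1}$, contradicting the defining property of $\chi$; hence $|y|\le 1$, i.e.\ $y\in\mathfrak{D}$.

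Next I would treat distinctness and orthonormality together. Each $\chi_{u(n)}$ is a continuous character of $K^{+}$ (it is $\chi$ composed with the additive map $x\mapsto u(n)x$), hence restricts to a character of $\mathfrak{D}$. For $m\neq n$ the difference $u(m)-u(n)\notin\mathfrak{D}$, since $u(m),u(n)$ represent distinct cosets, so by the annihilator lemma $\chi_{u(m)}\overline{\chi_{u(n)}}=\chi_{u(m)-u(n)}$ is a nontrivial character on $\mathfrak{D}$; in particular $\chi_{u(m)}|_{\mathfrak{D}}\neq\chi_{u(n)}|_{\mathfrak{D}}$, and the standard fact that the Haar integral of a nontrivial character over a compact group vanishes (translate the integral by a point where the character differs from $1$) gives $\langle\chi_{u(m)},\chi_{u(n)}\rangle_{L^{2}(\mathfrak{D})}=0$. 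When $m=n$ the inner product is $\int_{\mathfrak{D}}1\,dx=\mu(\mathfrak{D})=1$ by the normalization $\int_{K}\mathds{1}_{\mathfrak{D}}\,dx=1$. Thus the family is orthonormal and the listed characters are pairwise distinct.

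For completeness I would show that the linear span $\mathcal{A}$ of $\{\chi_{u(n)}|_{\mathfrak{D}}\}$ is dense in $C(\mathfrak{D})$ by Stone--Weierstrass. It is a subalgebra: $\chi_{u(m)}\chi_{u(n)}=\chi_{u(m)+u(n)}$, and writing $u(m)+u(n)=u(k)+d$ with $d\in\mathfrak{D}$ gives $\chi_{u(m)+u(n)}|_{\mathfrak{D}}=\chi_{u(k)}|_{\mathfrak{D}}$ on $\mathfrak{D}$. It contains the constants, since the representative of the coset $\mathfrak{D}$ lies in $\mathfrak{D}$ and hence its character restricts to $1$. It is closed under conjugation, as $\overline{\chi_{u(n)}}=\chi_{-u(n)}$. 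It separates points: given $x_1\neq x_2$ in $\mathfrak{D}$, pick $w\in K$ with $\chi(w)\neq 1$ (possible as $\chi$ is nontrivial), set $y=w/(x_1-x_2)$ and write $y=u(n)+d$ with $d\in\mathfrak{D}$; then $d(x_1-x_2)\in\mathfrak{D}$, so $\chi_{u(n)}(x_1)\overline{\chi_{u(n)}(x_2)}=\chi\bigl(y(x_1-x_2)\bigr)=\chi(w)\neq 1$. Density in $C(\mathfrak{D})$ yields density in $L^{2}(\mathfrak{D})$, so the orthonormal system is an orthonormal basis. Finally, for any character $\psi$ of $\mathfrak{D}$, the product $\psi\,\overline{\chi_{u(n)}}$ is again a character, so $\langle\psi,\chi_{u(n)}\rangle_{L^{2}(\mathfrak{D})}$ equals $1$ when $\psi=\chi_{u(n)}|_{\mathfrak{D}}$ and $0$ otherwise; since $\psi\neq 0$ in $L^{2}(\mathfrak{D})$ and the $\chi_{u(n)}|_{\mathfrak{D}}$ form a basis, some coefficient is nonzero, forcing $\psi=\chi_{u(n)}|_{\mathfrak{D}}$ for a unique $n$. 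Hence $\{\chi_{u(n)}|_{\mathfrak{D}}\}$ is precisely the list of all characters of $\mathfrak{D}$, each occurring once.

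The only genuinely nontrivial input is the existence of ``enough'' characters, which I would get either from the point-separation step above (the heart of which is that $\chi$ is nontrivial and $\mathfrak{D}$ is a ring) or, equivalently, from Pontryagin self-duality of $K^{+}$ together with the Peter--Weyl theorem for the compact abelian group $\mathfrak{D}$; both are available in \cite{T75,RV99}. So the ``hard part'' is essentially just organizing the separation-of-points argument; the rest is routine bookkeeping with the valuation and the Haar normalization.
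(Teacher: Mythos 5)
Your proof is correct. Note, however, that the paper does not prove this proposition at all: it is quoted verbatim from Taibleson \cite{T75}, and the preliminaries section explicitly defers all proofs to \cite{RV99,T75}, so there is no in-paper argument to compare against. Relative to the standard reference proof, your key lemma is the same --- the annihilator of $\mathfrak{D}$ under the pairing $(x,y)\mapsto\chi(xy)$ is exactly $\mathfrak{D}$, which is where the hypothesis that $\chi$ is trivial on $\mathfrak{D}$ but not on $\mathfrak{B}^{-1}$ enters --- but Taibleson gets completeness from Pontryagin duality ($\widehat{\mathfrak{D}}\cong K/\mathfrak{D}$) plus Peter--Weyl, whereas you substitute an explicit Stone--Weierstrass argument. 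That trade is reasonable: your route is more elementary and self-contained, at the price of the bookkeeping needed to check that the span of the restricted characters is a unital, conjugation-closed, point-separating subalgebra of $C(\mathfrak{D})$; since the proposition is stated for an \emph{arbitrary} complete set of coset representatives, your care in writing $u(m)+u(n)=u(k)+d$ and $-u(n)=u(m)+d$ with $d\in\mathfrak{D}$ (rather than assuming exact closure of the representatives under the group operations) is exactly right. Two small points worth writing out rather than compressing: in the separation-of-points step the equality $\chi_{u(n)}(x_1)\overline{\chi_{u(n)}(x_2)}=\chi\bigl(y(x_1-x_2)\bigr)$ holds only after discarding the factor $\chi\bigl(d(x_1-x_2)\bigr)=1$, which you do justify but should display; and in the final step you should say explicitly that ``character on $\mathfrak{D}$'' means continuous and unitary, so that $\psi\overline{\chi_{u(n)}}$ integrates to $\mu(\mathfrak{D})=1$ or $0$ according as it is trivial or not. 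With those spelled out the argument is complete.
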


 The Fourier series of $f \in L^{1}(\mathfrak{D})$ is defined as $\sum \limits_{n=0}^{\infty} \hat{f}(u(n))\chi_{u(n)}$ where Fourier coefficients are given by $\hat{f}(u(n)):=\int \limits_{\mathfrak{D}}f(x)\overline{\chi_{u(n)}(x)} dx$.
 
\begin{defn}(Fourier transform).
	For $f \in L^{1}(K) \cap L^{2}(K)$, the Fourier transform $\mathcal{F}$ is defined by $$\mathcal{F}[f](\xi):= \hat{f}(\xi) = \int \limits_{K} f(x)\overline{\chi_\xi (x)} dx = \int \limits_{K} f(x)\chi(- \xi x) dx.$$
\end{defn}
This Fourier transform satisfy following
\begin{itemize}
	\item $\mathcal{F} : L^{1}(K) \rightarrow L^{\infty}(K)$ is a bounded linear transformation satisfying $\|\hat{f}\|_{\infty} \leq \|f\|_1$.
	\item If $f \in L^{1}(K)$ then $\hat{f}$ is uniformly continuous on $K$.
	\item \textbf{Plancheral Theorem :} $\|\hat{f}\|_{2} = \|f\|_{2}$, for $f \in L^{2}(K)$.
	\item \textbf{Parseval Theorem :} $\langle f,g \rangle = \langle \hat{f}, \hat{g} \rangle$, for all $f,g \in L^{2}(K)$.
	\item \textbf{Riemann-Lebesgue Theorem :} If $f \in L^{1}(K)$, then $\hat{f}(\xi) \rightarrow 0$ as $|\xi| \rightarrow \infty$.
\end{itemize}

In order to impose `natural' order on $\{u(n)\}_{n=0}^\infty$ used to develop the Fourier series theory on $L^{2}(\mathfrak{D})$, we use a set $\{1=\epsilon_0, \epsilon_{1}, \ldots, \epsilon_{c-1}\} \subset \mathfrak{D}^{*}=\mathfrak{D}/\mathfrak{B}$ such that span$\{\epsilon_0, \ldots, \epsilon_{c-1}\} \cong GF(q)$. Let $\mathbb{N}_{0}=\mathbb{N} \cup \{0\}$. We now give an example of a character $\chi$ on $K$ that is trivial on $\mathfrak{D}$ but non-trivial on $\mathfrak{B}^{-1}$ as follows \cite{Z97}
\begin{center}
$\chi(\epsilon_{k}\mathfrak{p}^{-j})= \begin{cases}
e^{2 \pi\frac{i}{p}} & \mbox{if $k=0,~ j=1$};\\
1 & \mbox{if $k=1, \ldots, c-1$ or $j \neq 1$.}
\end{cases}$ 
\end{center}

\begin{defn}
	Any $n\in \mathbb{N}_{0} \cap [0,q)$ can be written as $n=\sum \limits_{k=0}^{c-1} a_{k} p^{k},~~~~~~~\text{where } 0\leq a_{k} <p.$
	Define $u(n)=(\sum \limits_{k=0}^{c-1} a_{k} \epsilon_{k})\mathfrak{p}^{-1}$.
	Note that $\{u(n)\}_{n=0}^{q-1}$ is a complete set of coset representatives of $\mathfrak{D}$ in $\mathfrak{B}^{-1}$. Now for $n \in \mathbb{N}_{0} \cap [q,\infty)$, we have, $n=\sum \limits_{k=0}^{s} b_{k} q^{k},~~~~~~~ 0\leq b_{k} <q.$
	Define $u(n)=\sum \limits_{k=0}^{s} \mathfrak{p}^{-k} u(b_k)$.
\end{defn} 
This defines $u(n)$ for all $n \in \mathbb{N}_0$. In general that $u$ is not linear but satisfies $$u(rq^{k}+s)=\mathfrak{p}^{-k}u(r)+u(s)~~ \text{where } r,k \in \mathbb{N}_{0}, 0\leq s< q^{k}.$$
In the following proposition, Behera and Jahan \cite{BJ12} proved important properties of aforementioned $u(n)$ which has repeated application rest of the paper.
\begin{prop} \label{subgroup}
	Let $u(n)$ be defined as above for all $n\in \mathbb{N}_{0}$.
	\begin{enumerate}[label=(\alph*)]
		\item $u(n)=0$ if and only if $n=0$. If $k\geq 1$, then we have $|u(n)|=q^{k}$ if and only if $q^{k-1} \leq n<q^k$.
		\item $\{u(n)\}_{n=0}^\infty =\{-u(n)\}_{n=0}^\infty$.
		\item For a fixed $m \in \mathbb{N}_{0}$, we have $\{u(m)+u(n)\}_{n=0}^\infty =\{u(n)\}_{n=0}^\infty$.
	\end{enumerate}
\end{prop}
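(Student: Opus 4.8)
The plan is to reduce all three parts to a single structural fact: that $\Lambda:=\{u(n):n\in\mathbb{N}_{0}\}$ is an additive subgroup of $K^{+}$ — which is the content hinted at by the name of this proposition. I would first dispose of (a) by a direct valuation computation, then establish the subgroup property, and finally read (b) and (c) off it.

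\emph{Part (a).} For $n=0$ this is the definition. For $1\le n<q$, write $n=\sum_{k=0}^{c-1}a_k p^{k}$ with $0\le a_k<p$ not all zero; since the images of $\epsilon_{0},\dots,\epsilon_{c-1}$ in $\mathfrak{D}/\mathfrak{B}\cong GF(q)$ form a $GF(p)$-basis, $\sum_k a_k\epsilon_k\notin\mathfrak{B}$, hence is a unit, so $|u(n)|=|\mathfrak{p}^{-1}|=q$; this is the case $k=1$. For $n\ge q$ I would take $s\ge1$ with $q^{s}\le n<q^{s+1}$ and write $n=\sum_{k=0}^{s}b_k q^{k}$ with $0\le b_k<q$, $b_s\ne0$. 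In $u(n)=\sum_{k=0}^{s}\mathfrak{p}^{-k}u(b_k)$ the term $k=s$ has valuation $q^{s}\cdot q=q^{s+1}$ by the previous case, while every other nonzero term has valuation at most $q^{s-1}\cdot q=q^{s}$; the ultrametric inequality (in its strict form, one term dominating) then gives $|u(n)|=q^{s+1}$. Writing $k=s+1$ this says $|u(n)|=q^{k}\iff q^{k-1}\le n<q^{k}$, and in particular $u(n)\ne0$ for $n\ge1$.

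\emph{The subgroup property.} Here I would use crucially that $K$ has characteristic $p$, so $p\,\epsilon_j=0$ and there is no carrying. Consequently $\mathcal{E}:=\{\sum_{j=0}^{c-1}a_j\epsilon_j:0\le a_j<p\}$ is a subgroup of $(\mathfrak{D},+)$ of order $q$ (sum and negative are taken digit-by-digit modulo $p$; the $q$ elements are distinct because the $\bar\epsilon_j$ are $GF(p)$-independent). Unwinding the definition of $u$ — first the $q$-ary expansion of $n$, then the $p$-ary expansions of its digits, together with $u(rq^{k}+s)=\mathfrak{p}^{-k}u(r)+u(s)$ — identifies
\[
\Lambda=\bigcup_{s\ge0}\ \sum_{k=0}^{s}\mathfrak{p}^{-(k+1)}\mathcal{E}.
\]
Each $\sum_{k=0}^{s}\mathfrak{p}^{-(k+1)}\mathcal{E}$ is a finite subgroup of $K^{+}$ (a finite sum of the subgroups $\mathfrak{p}^{-(k+1)}\mathcal{E}$), and these are nested increasing in $s$, so $\Lambda$ is a subgroup of $K^{+}$. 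Parts (b) and (c) are then immediate: negation is a bijection of the group $\Lambda$ onto itself, i.e.\ $\{-u(n)\}_{n=0}^{\infty}=-\Lambda=\Lambda=\{u(n)\}_{n=0}^{\infty}$; and for fixed $m$, since $u(m)\in\Lambda$, translation by $u(m)$ is a bijection of $\Lambda$, so $\{u(m)+u(n)\}_{n=0}^{\infty}=u(m)+\Lambda=\Lambda=\{u(n)\}_{n=0}^{\infty}$.

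\emph{Main obstacle.} The only genuinely non-bookkeeping step is the middle one: checking that the nested digit expansions really do yield the displayed description of $\Lambda$ — equivalently, that $\Lambda$ is closed under the actual field addition, not merely under ``add, then reduce''. This rests squarely on $\mathrm{char}\,K=p$ and on uniqueness of the $q$-ary and $p$-ary expansions; one could instead verify closure of $\Lambda$ under $+$ and $-$ directly on digit expansions, but the subgroup formulation is cleaner and makes (b) and (c) one-liners. Part (a) is routine once one spots the dominant term.
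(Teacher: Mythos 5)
Your argument is correct. Note, however, that the paper itself offers no proof of this proposition: it is imported verbatim from Behera--Jahan \cite{BJ12}, so there is no in-paper proof to compare against. Relative to the argument in that source, which verifies (b) and (c) by computing $-u(n)$ and $u(m)+u(n)$ directly on the nested $q$-ary/$p$-ary digit expansions (no carrying, since $\mathrm{char}\,K=p$), your packaging of the same computation as the statement that $\Lambda=\{u(n)\}_{n\ge 0}=\bigcup_{s\ge 0}\sum_{k=0}^{s}\mathfrak{p}^{-(k+1)}\mathcal{E}$ is an increasing union of finite subgroups is a genuine, and cleaner, reorganization: it isolates the one nontrivial input ($p\,\epsilon_j=0$, hence $\mathcal{E}$ is a subgroup of order $q$, hence so is each $\sum_{k}\mathfrak{p}^{-(k+1)}\mathcal{E}$) and then gets (b) and (c) for free from the group axioms, rather than re-running the digit bookkeeping twice. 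Your part (a) is the standard dominant-term ultrametric argument and matches what \cite{BJ12} does. The only step I would spell out slightly more is the identification of $\Lambda$ with the displayed union, i.e.\ that $n\mapsto u(n)$ maps $[0,q^{s+1})$ \emph{onto} $\sum_{k=0}^{s}\mathfrak{p}^{-(k+1)}\mathcal{E}$; this follows from uniqueness of the digit expansions together with the relation $u(rq^{k}+s)=\mathfrak{p}^{-k}u(r)+u(s)$, exactly as you indicate, so there is no gap --- it is just the place where all the content sits, as you yourself correctly flag.
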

It clearly follows from Proposition \ref{subgroup} that $\{u(n)\}_{n=0}^\infty$ is a subgroup of $K^+$. Since $\bigcup \limits_{j\in \mathbb{Z}} \mathfrak{p}^{-j}\mathfrak{D}=K$, we shall use $\mathfrak{p}^{-1}$ as the dilation and since $\{u(n)\}_{n=0}^\infty$ is a complete list of distinct coset representatives of $\mathfrak{D}$ in $K$, we shall use $\{u(n)\}_{n=0}^\infty$ as the translation set in rest of this article.\\
A function $f \in L^{2}(K)$ is said to be integral periodic if $$f(\cdot + u(n))=f(\cdot), \hspace{0.5cm} \forall n \in \mathbb{N}_{0}.$$

\subsection{Multiframelets}

\begin{defn}(Multiwavelet).
	A set of functions $\Psi = \{\psi^1, \ldots, \psi^L\} \subset L^{2}(K)$ is said to be a multiwavelet of order $L$ if affine system $$\mathcal{W}(\Psi):=\{\psi^{l}_{j,t}:=q^{\frac{j}{2}}\psi^{l}(\mathfrak{p}^{-j} \cdot -u(t)) : j \in \mathbb{Z},~ t \in \mathbb{N}_{0},~ l \in \mathfrak{L}\}$$ is an orthonormal basis for $L^{2}(K)$.
\end{defn}

Note that for $f \in L^{2}(K)$ and $\xi \in K$, $$\widehat{f_{j,t}}{(\xi)} = q^{-\frac{j}{2}} \chi_{u(t)}(-\mathfrak{p}^{j}\xi)\hat{f}(\mathfrak{p}^{j}\xi).$$

\begin{defn}(Multiwavelet Set).
	A measurable set $W \subset K$ is said to be a multiwavelet set of order $L$ if $W = \bigcup \limits_{l=1}^{L} W_l$ for measurable sets $W_l$ and $\hat{\psi}^{l}=\mathds{1}_{W_{l}}$ where $\{\psi^1, \ldots, \psi^L\}$ is a multiwavelet for $L^{2}(K)$.
\end{defn}
When $L=1$, $W$ is simply said to be a wavelet set.
 
\begin{defn}(Multiframelet).
	For a set of basic functions $\mathcal{F}=\{f^{(1)}, \ldots, f^{(L)}\} \subset L^{2}(K)$, affine system $\mathcal{W}(\mathcal{F})$ is said to be a multiframelet of order $L$ if there exist finite $A,B > 0$ so that for all $g \in L^{2}(K)$ we have,
	\begin{equation} \label{dmf}
	A\left \| g \right \|^{2} \leq \sum \limits_{ l \in \mathfrak{L}} \sum \limits_{j \in \mathbb{Z}} \sum \limits_{t \in \mathbb{N}_{0}}| \langle g,f^{(l)}_{j,t} \rangle |^{2} \leq B\left \| g \right \|^{2}.
	\end{equation}
\end{defn}

When $L=1$, it is simply said to be a framelet. $\mathcal{W}(\mathcal{F})$ is said to be a besselet with bound $B$ if it satisfy later inequality of equation \eqref{dmf}. $\mathcal{W}(\mathcal{F})$ is called a tight multiframelet if $A=B$ and Parseval multiframelet if $A=1=B$ in equation \eqref{dmf}. It is well-known that $\mathcal{W}(\mathcal{F})$ is a Parseval multiframelet for $L^{2}(K)$ if and only if $g=\sum \limits_{ l \in \mathfrak{L}} \sum \limits_{j \in \mathbb{Z}} \sum \limits_{t \in \mathbb{N}_{0}} \langle g,f^{(l)}_{j,t} \rangle f^{(l)}_{j,t}$ for all $g \in L^{2}(K)$. Therefore, Parseval multiframelet is a generalization of the concept of orthonormal basis to a system having no minimality condition. Similar to multiwavelet set, we define multiframelet set.

\begin{defn}(Multiframelet set).
	A measurable set $F \subset K$ is said to be a multiframelet set/ Parseval multiframelet set of order $L$ if $F=\bigcup \limits_{l=1}^{L} F_l$ for measurable sets $F_l$ where $\hat{f^{(l)}}=\mathds{1}_{F_l}$ and $\{f^{(l)} : l=1, \ldots,L\}$ generates a multiframelet/ Parseval multiframelet for $L^{2}(K)$.
\end{defn}

The following theorem is a characterization of $\mathcal{W}(\Psi)$ to be a Parseval multiframelet for $L^{2}(K)$, given by Behera and Jahan \cite{BJ15}.

\begin{thm} \label{char}
	Suppose $\Psi = \{\psi^{(m)} : m = 1, 2, \ldots, L\} \subset L^{2}(K)$. Then the affine system $\mathcal{W}(\Psi)$ is a Parseval multiframelet for $L^{2}(K)$ if and only if the following equalities holds for $\xi \in K$ a.e.
	
	\begin{enumerate}[label=(\roman*)]
		\item \begin{equation} \label{ort}
			\sum \limits_{m=1}^L \sum \limits_{j \in \mathbb{Z}} \lvert \widehat{\psi^{(m)}}(\mathfrak{p}^{-j}\xi) \rvert^{2} =1.
		\end{equation}
		\item $\sum \limits_{m=1}^L \sum \limits_{j \in \mathbb{N}_{0}} \widehat{\psi^{(m)}}(\mathfrak{p}^{-j}\xi) \overline{ \widehat{\psi^{(m)}}(\mathfrak{p}^{-j}(\xi+u(t)))}=0 ~~~~~~~~~~\text{     for } t \in \mathbb{N}_{0} \setminus q\mathbb{N}_{0}$.
	\end{enumerate}
	In particular, $\Psi$ is an multiwavelet of order $L$ in $L^{2}(K)$ if and only if $\| \psi^{(m)} \|=1$ for all $1 \leq m \leq L$ and satisfy above two equations.	
\end{thm}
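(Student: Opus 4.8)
The plan is to follow the standard route for characterizing tight affine frames (as over $\mathbb{R}$), adapted to $K$: pass to a dense class of test functions, expand the frame quadratic form using a periodization identity built from Proposition \ref{subgroup} and the orthonormal basis $\{\chi_{u(t)}\}_{t\in\mathbb{N}_0}$ of $L^{2}(\mathfrak{D})$, and then extract (i) and (ii) from the ``diagonal'' and ``off-diagonal'' parts of the resulting master identity.

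First I would fix the dense subspace $\mathcal{D}\subset L^{2}(K)$ of functions $g$ with $\hat g\in L^{\infty}(K)$, $\hat g$ compactly supported, and $0\notin\operatorname{supp}\hat g$. Since $\mathcal{W}(\Psi)$ is a Parseval multiframelet exactly when $\sum_{m=1}^{L}\sum_{j\in\mathbb{Z}}\sum_{t\in\mathbb{N}_0}|\langle g,\psi^{(m)}_{j,t}\rangle|^{2}=\|g\|^{2}$ for all $g$, and since (granting the Bessel bound, which follows from boundedness of the left side of (i)) that quadratic form is continuous on $L^{2}(K)$, it suffices to evaluate it on $\mathcal{D}$. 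For $g\in\mathcal{D}$, Parseval's theorem, the displayed Fourier transform of $f_{j,t}$, the substitution $\eta=\mathfrak{p}^{j}\xi$, and then the tiling $K=\bigsqcup_{n}(u(n)+\mathfrak{D})$ together with the fact that $\{\chi_{u(t)}\}_{t\in\mathbb{N}_0}$ is a complete orthonormal system on $\mathfrak{D}$ give, for each fixed $j\in\mathbb{Z}$ and each $m$,
\[
\sum_{t\in\mathbb{N}_0}\bigl|\langle g,\psi^{(m)}_{j,t}\rangle\bigr|^{2}
=q^{\,j}\int_{\mathfrak{D}}\Bigl|\sum_{n\in\mathbb{N}_0}\hat g\bigl(\mathfrak{p}^{-j}(\zeta+u(n))\bigr)\,\overline{\widehat{\psi^{(m)}}(\zeta+u(n))}\Bigr|^{2}\,d\zeta .
\]
All the interchanges of sum and integral here are legitimate because $g\in\mathcal{D}$ (so the inner $n$-sums are locally finite) and $\widehat{\psi^{(m)}}\in L^{2}(K)$.

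Next I would expand the square, separate the diagonal $n=n'$ from the off-diagonal $n\neq n'$, and in each case recombine $\sum_{n}$ with $\int_{\mathfrak{D}}$ into $\int_{K}$ via the tiling before summing over $m$ and $j$. The diagonal produces $\int_{K}|\hat g(\xi)|^{2}\bigl(\sum_{m=1}^{L}\sum_{j\in\mathbb{Z}}|\widehat{\psi^{(m)}}(\mathfrak{p}^{-j}\xi)|^{2}\bigr)\,d\xi$. For the off-diagonal, write the frequency shift as $u(k)=u(n')-u(n)$; Proposition \ref{subgroup}(b),(c) makes this a bijective relabelling with $k$ ranging over $\mathbb{N}_{0}$, and every nonzero $u(k)$ factors uniquely as $u(k)=\mathfrak{p}^{-a}u(s)$ with $a\in\mathbb{N}_{0}$ and $s\in\mathbb{N}_{0}\setminus q\mathbb{N}_{0}$ (the decomposition $\Lambda\setminus\{0\}=\bigsqcup_{a\ge0}\mathfrak{p}^{-a}(\Lambda\setminus\mathfrak{p}^{-1}\Lambda)$ for the translation lattice $\Lambda=\{u(t)\}_{t\in\mathbb{N}_0}$, using $\mathfrak{p}^{-1}u(n)=u(qn)$). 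Substituting $\eta\mapsto\mathfrak{p}^{-a}\eta$ and reindexing $j\mapsto j+a$ in the integral collapse the $\widehat{\psi^{(m)}}$-factors into $\sum_{j\in\mathbb{N}_{0}}\widehat{\psi^{(m)}}(\mathfrak{p}^{-j}\eta)\,\overline{\widehat{\psi^{(m)}}(\mathfrak{p}^{-j}(\eta+u(s)))}$, which is precisely the quantity appearing in (ii). The outcome is the master identity (for $g\in\mathcal{D}$)
\[
\sum_{m,j,t}\bigl|\langle g,\psi^{(m)}_{j,t}\rangle\bigr|^{2}
=\int_{K}|\hat g(\xi)|^{2}\,\Theta(\xi)\,d\xi
+\sum_{s\in\mathbb{N}_{0}\setminus q\mathbb{N}_{0}}\ \sum_{p\in\mathbb{Z}}q^{\,p}\!\int_{K}\overline{\hat g(\mathfrak{p}^{-p}\eta)}\,\hat g\bigl(\mathfrak{p}^{-p}(\eta+u(s))\bigr)\,\mathcal{T}_{s}(\eta)\,d\eta,
\]
where $\Theta(\xi)=\sum_{m=1}^{L}\sum_{j\in\mathbb{Z}}|\widehat{\psi^{(m)}}(\mathfrak{p}^{-j}\xi)|^{2}$ and $\mathcal{T}_{s}(\xi)=\sum_{m=1}^{L}\sum_{j\in\mathbb{N}_{0}}\widehat{\psi^{(m)}}(\mathfrak{p}^{-j}\xi)\,\overline{\widehat{\psi^{(m)}}(\mathfrak{p}^{-j}(\xi+u(s)))}$.

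Finally I would conclude. If (i) and (ii) hold then $\Theta\equiv1$ and $\mathcal{T}_{s}\equiv0$, so the right-hand side equals $\|\hat g\|^{2}=\|g\|^{2}$ for every $g\in\mathcal{D}$, hence for every $g\in L^{2}(K)$, so $\mathcal{W}(\Psi)$ is a Parseval multiframelet. Conversely, if $\mathcal{W}(\Psi)$ is Parseval, the master identity gives $\int_{K}|\hat g|^{2}(\Theta-1)+(\text{cross term})=0$ for all $g\in\mathcal{D}$: choosing $\hat g$ supported in small balls bounded away from $0$ kills the cross term and forces $\Theta=1$ a.e.\ at its Lebesgue points, which is (i); the cross term then vanishes by itself for all $g\in\mathcal{D}$, and choosing $\hat g$ so that exactly one pair $(p,s)$ survives in the double sum forces $\mathcal{T}_{s}\equiv0$ for each $s\in\mathbb{N}_{0}\setminus q\mathbb{N}_{0}$, which is (ii). For the last assertion: a Parseval frame whose vectors all have norm $1$ is automatically orthonormal (test $1=\|e\|^{2}=\sum_i|\langle e,e_i\rangle|^{2}$ at $e=e_i$) and, being a frame, complete, hence an orthonormal basis; since $\|\psi^{(m)}_{j,t}\|=\|\psi^{(m)}\|$, the system $\mathcal{W}(\Psi)$ is an orthonormal basis of $L^{2}(K)$ iff it is a Parseval multiframelet with $\|\psi^{(m)}\|=1$ for all $m$, which by the first part is iff (i), (ii) and $\|\psi^{(m)}\|=1$ all hold. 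The step I expect to be the real obstacle is this converse: engineering the test functions in $\mathcal{D}$ so that the cross term isolates a single $\mathcal{T}_{s}$ — the supports $\mathfrak{p}^{-p}E$ and $\mathfrak{p}^{-p}E-u(s)$ must be disjoint except in the intended case, which is delicate since $\mathfrak{p}^{-p}u(s)$ can be made arbitrarily small as $p\to+\infty$ — together with the Lebesgue-density argument needed to pass from ``the integral against every admissible $\hat g$ vanishes'' to the pointwise a.e.\ identities; the remaining items (the Bessel bound, the bookkeeping in the off-diagonal reindexing, and the Fubini-type interchanges on $\mathcal{D}$) are routine but must be handled carefully.
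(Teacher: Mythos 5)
The paper gives no proof of this theorem --- it is quoted as a known result of Behera and Jahan \cite{BJ15} --- so there is no in-paper argument to compare against. Your proposal reconstructs the standard periodization/diagonal--off-diagonal argument that \cite{BJ15} itself uses (the local-field adaptation of the real-line tight-frame characterization), and its outline is sound; the only portion left genuinely unfinished is the necessity direction (extracting $\Theta\equiv 1$ and $\mathcal{T}_{s}\equiv 0$ pointwise from the master identity by engineering test functions), which you correctly flag as the delicate step rather than claiming to have completed it.
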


Behera and Jahan have given following three equivalent conditions of Parseval multiframelet in \cite{BJ15}.

\begin{thm} \label{eqiv}
	Suppose $\Psi = \{\psi^{(m)} : m = 1, 2, \ldots, L\} \subset L^{2}(K)$. Assume that $\mathcal{W}(\Psi)$ is a besselet with bound 1. Then the following are equivalent
	\begin{enumerate}
		\item $\mathcal{W}(\Psi)$ is a Parseval multiframelet.
		\item $\Psi$ satisfies $\sum \limits_{m=1}^L \sum \limits_{j \in \mathbb{Z}} \lvert \widehat{\psi
			^{(m)}}(\mathfrak{p}^{-j}\xi) \rvert^{2} =1$ for $\xi \in K$ a.e.
		\item $\Psi$ satisfies 
		\begin{equation} \label{setint}
			\sum \limits_{m=1}^{L} \int \limits_{K} \frac{\lvert \widehat{\psi^{(m)}} (\xi) \rvert^2}{\lvert \xi \rvert} d\xi = \frac{q-1}{q}. 
		\end{equation}
	\end{enumerate}
\end{thm}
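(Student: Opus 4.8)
The plan is to prove the two nontrivial equivalences $(2)\Leftrightarrow(3)$ and $(1)\Leftrightarrow(2)$ separately. Throughout, write $w(\xi):=\sum_{m=1}^{L}\sum_{j\in\mathbb{Z}}\bigl\lvert\widehat{\psi^{(m)}}(\mathfrak{p}^{-j}\xi)\bigr\rvert^{2}$ for the Calder\'on-type sum appearing in \eqref{ort} and in $(2)$. Two facts are used repeatedly: reindexing $j\mapsto j-1$ gives the homogeneity $w(\mathfrak{p}^{k}\xi)=w(\xi)$ for all $k\in\mathbb{Z}$, a.e.\ $\xi$; and the standing assumption that $\mathcal{W}(\Psi)$ is a besselet with bound $1$ forces $w(\xi)\le 1$ a.e.\ (the Bessel half of the reproducing identity behind Theorem~\ref{char}, seen by testing the Bessel inequality against $g$ with $\widehat g$ a normalized indicator of a small ball and passing to a Lebesgue point of $w$).

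For $(2)\Leftrightarrow(3)$, compute the left-hand side of \eqref{setint} directly. Partition $K\setminus\{0\}=\bigsqcup_{s\in\mathbb{Z}}A_{s}$, $A_{s}:=\{\xi:\lvert\xi\rvert=q^{-s}\}=\mathfrak{p}^{s}(\mathfrak{D}\setminus\mathfrak{B})$, on which $\lvert\xi\rvert^{-1}=q^{s}$; substituting $\xi=\mathfrak{p}^{s}\eta$ (so $d\xi=q^{-s}\,d\eta$) and applying Tonelli,
\begin{equation*}
\sum_{m=1}^{L}\int_{K}\frac{\lvert\widehat{\psi^{(m)}}(\xi)\rvert^{2}}{\lvert\xi\rvert}\,d\xi
=\sum_{s\in\mathbb{Z}}\int_{\mathfrak{D}\setminus\mathfrak{B}}\sum_{m=1}^{L}\bigl\lvert\widehat{\psi^{(m)}}(\mathfrak{p}^{s}\eta)\bigr\rvert^{2}\,d\eta
=\int_{\mathfrak{D}\setminus\mathfrak{B}}w(\eta)\,d\eta .
\end{equation*}
Since $\mu(\mathfrak{D}\setminus\mathfrak{B})=1-q^{-1}=\frac{q-1}{q}$, condition $(2)$ (i.e.\ $w\equiv1$) gives \eqref{setint} immediately. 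Conversely \eqref{setint} says $\int_{\mathfrak{D}\setminus\mathfrak{B}}(1-w)=0$; since $1-w\ge0$ a.e.\ this gives $w=1$ a.e.\ on $\mathfrak{D}\setminus\mathfrak{B}$, hence a.e.\ on each $A_{s}=\mathfrak{p}^{s}(\mathfrak{D}\setminus\mathfrak{B})$ by homogeneity, i.e.\ a.e.\ on $K$, which is $(2)$.

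For $(1)\Leftrightarrow(2)$: the direction $(1)\Rightarrow(2)$ is immediate, since a Parseval multiframelet satisfies \eqref{ort} by Theorem~\ref{char} and \eqref{ort} is exactly $(2)$. For $(2)\Rightarrow(1)$, by Theorem~\ref{char} it suffices to verify equality (ii) there. I would use the reproducing identity from the proof of Theorem~\ref{char}: for $g$ in the dense class of functions with $\widehat g$ bounded and supported on a compact subset of $K\setminus\{0\}$,
\begin{equation*}
\sum_{m=1}^{L}\sum_{j\in\mathbb{Z}}\sum_{t\in\mathbb{N}_{0}}\bigl\lvert\langle g,\psi^{(m)}_{j,t}\rangle\bigr\rvert^{2}
=\int_{K}\lvert\widehat g(\xi)\rvert^{2}w(\xi)\,d\xi+R(g),
\end{equation*}
where $R(g)=\sum_{v}\int_{K}\widehat g(\xi)\,\overline{\widehat g(\xi+v)}\,\Theta_{v}(\xi)\,d\xi$ is a finite sum over nonzero shifts $v$ built from the translates $u(t)$, $t\in\mathbb{N}_{0}\setminus q\mathbb{N}_{0}$, and ``$R\equiv0$ on this class'' is precisely equality (ii). Under $(2)$ we have $w\equiv1$, so the first term is $\|g\|^{2}$, and the besselet bound $1$ forces $R(g)\le0$ for every admissible $g$. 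Replacing $g$ by the translate $g(\cdot-y)$ multiplies the $v$-term of $R$ by the character value $\chi(yv)$, so $y\mapsto R\bigl(g(\cdot-y)\bigr)=\sum_{v}\chi(yv)\,c_{v}$ is a continuous, everywhere-nonpositive function whose average over $\mathfrak{B}^{-N}$ equals $\sum_{v\in\mathfrak{B}^{N}}c_{v}$, which vanishes for large $N$ because the finitely many $v$ are nonzero. A continuous nonpositive function with zero mean is $\equiv0$, so $R(g)=0$; hence $\sum_{m,j,t}\lvert\langle g,\psi^{(m)}_{j,t}\rangle\rvert^{2}=\|g\|^{2}$ on a dense set, the frame operator of $\mathcal{W}(\Psi)$ is the identity, and $\mathcal{W}(\Psi)$ is a Parseval multiframelet.

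The calculations in $(2)\Leftrightarrow(3)$ and the identification of $R(g)$ from the proof of Theorem~\ref{char} are routine. The main obstacle is the last step of $(2)\Rightarrow(1)$ — promoting the one-sided bound $R(g)\le0$ to the exact orthogonality (ii) of Theorem~\ref{char}; the translation-averaging argument handles it, but it relies on knowing that, for $g$ bandlimited and supported away from the origin, $R(g)$ is a finite combination of off-diagonal terms with nonzero frequencies, which is precisely why that dense class of test functions is the right one to work with.
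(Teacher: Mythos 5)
The paper does not actually prove this theorem: it is quoted verbatim from Behera--Jahan \cite{BJ15}, so there is no in-paper argument to compare yours against, and your attempt has to be judged on its own. Your $(1)\Rightarrow(2)$ is immediate from Theorem~\ref{char}, and your $(2)\Leftrightarrow(3)$ is correct and complete: the folding of $\int_{K}|\widehat{\psi^{(m)}}|^{2}|\xi|^{-1}\,d\xi$ onto $\mathfrak{D}\setminus\mathfrak{B}$ (with $\mu(\mathfrak{D}\setminus\mathfrak{B})=(q-1)/q$), together with the standard fact that the Bessel bound $1$ forces the Calder\'on sum $w\le1$ a.e., is exactly the right mechanism, and your Lebesgue-point sketch of that standard fact is the usual one.

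The genuine gap is in $(2)\Rightarrow(1)$, at the step you yourself flag as the crux. The off-diagonal remainder $R(g)$ is \emph{not} a finite sum over nonzero shifts for $g$ in your dense class. The shifts that occur are $v=\mathfrak{p}^{-\ell}u(t)$ with $t\in\mathbb{N}_{0}\setminus q\mathbb{N}_{0}$ and $\ell\in\mathbb{Z}$; since $|v|=q^{\ell}|u(t)|$, these accumulate at $0$ as $\ell\to-\infty$, and for $\widehat g$ supported in an annulus $\{a\le|\xi|\le b\}$ the support condition only forces $|v|\le b$ --- an upper bound, not a lower one --- so infinitely many shifts of arbitrarily small norm can contribute. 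With infinitely many $v$, your averaging step collapses: the mean of $R(g_{y})$ over $\mathfrak{B}^{-N}$ equals $\sum_{v\in\mathfrak{B}^{N}}c_{v}$, which is never an empty sum, only a tail tending to $0$; and a continuous nonpositive function whose averages over the growing balls $\mathfrak{B}^{-N}$ merely tend to $0$ need not vanish (the averages are computed over sets of measure $q^{N}\to\infty$, so $\int(-R(g_{y}))\,dy$ can stay positive while the averages decay). The argument can be repaired, but it needs two extra ingredients you do not supply: (i) absolute convergence $\sum_{v}|c_{v}|<\infty$ of the off-diagonal series (the usual Cauchy--Schwarz estimate against the Bessel-type sums), which justifies both the regrouping by $v$ and the continuity of $y\mapsto R(g_{y})$; and (ii) instead of ``zero mean $\Rightarrow$ identically zero,'' the coefficient extraction $|c_{v_{0}}|=\lim_{N}\bigl|\mu(\mathfrak{B}^{-N})^{-1}\int_{\mathfrak{B}^{-N}}R(g_{y})\overline{\chi(yv_{0})}\,dy\bigr|\le\lim_{N}\mu(\mathfrak{B}^{-N})^{-1}\int_{\mathfrak{B}^{-N}}(-R(g_{y}))\,dy=-\lim_{N}\sum_{v\in\mathfrak{B}^{N}}c_{v}=0$, which kills each $c_{v}$ individually. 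As written, the proof of $(2)\Rightarrow(1)$ does not go through.
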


\begin{thm}\cite{BJ15} \label{cd}
	Suppose $\Psi =\{\psi^1, \ldots, \psi^L\} \subset L^{2}(K)$. Then the following are equivalent
	\begin{enumerate} [label=(\roman*)]
		\item $\Psi$ is a multiwavelet of $L^{2}(K)$.
		\item $\Psi$ satisfy equations \eqref{charofortho} and \eqref{ort}.
		\item $\Psi$ satisfy equations \eqref{charofortho} and \eqref{setint}.
	\end{enumerate}
\end{thm}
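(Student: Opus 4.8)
The plan is to show that each of (i), (ii), (iii) is equivalent to the single assertion that $\mathcal{W}(\Psi)$ is at once an orthonormal system and a Parseval multiframelet for $L^{2}(K)$ --- equivalently, an orthonormal basis --- and to move between the three formulations by means of Theorem~\ref{eqiv}. The pivot is that equation~\eqref{charofortho} is precisely the Fourier-domain characterization of the statement ``$\mathcal{W}(\Psi)$ is an orthonormal system in $L^{2}(K)$''; granting this, an orthonormal system obeys Bessel's inequality, so $\mathcal{W}(\Psi)$ is automatically a besselet with bound $1$, which is the standing hypothesis of Theorem~\ref{eqiv}. Throughout I would use two elementary facts: a countable family is an orthonormal basis if and only if it is simultaneously an orthonormal system and a Parseval frame; and, dilations and integer translations being unitary on $L^{2}(K)$, $\|\psi^{(m)}_{j,t}\|_{2}=\|\psi^{(m)}\|_{2}$ for all $j,t$.

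First I would prove (i)$\Leftrightarrow$(ii). If $\Psi$ is a multiwavelet, then $\mathcal{W}(\Psi)$ is an orthonormal basis, hence an orthonormal system, so \eqref{charofortho} holds; and since $\mathcal{W}(\Psi)$ is then a Parseval multiframelet, Theorem~\ref{char} yields \eqref{ort}. Conversely, if \eqref{charofortho} and \eqref{ort} hold, then $\mathcal{W}(\Psi)$ is an orthonormal system, hence a besselet with bound $1$, so by Theorem~\ref{eqiv} it is a Parseval multiframelet; an orthonormal system that is a Parseval frame is an orthonormal basis, so $\Psi$ is a multiwavelet. (Alternatively one may route through Theorem~\ref{char}: since \eqref{charofortho} expresses orthonormality, it forces $\|\psi^{(m)}\|_{2}=1$ for every $m$ and contains the relations of Theorem~\ref{char}(ii), so with \eqref{ort} the ``in particular'' clause of Theorem~\ref{char} applies directly.)

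Next I would prove (ii)$\Leftrightarrow$(iii). Again \eqref{charofortho} makes $\mathcal{W}(\Psi)$ a besselet with bound $1$, so Theorem~\ref{eqiv} gives \eqref{ort}$\,\Leftrightarrow\,$\eqref{setint}; hence ``\eqref{charofortho} and \eqref{ort}'' holds if and only if ``\eqref{charofortho} and \eqref{setint}'' holds. Combined with the previous step, this gives (i)$\,\Leftrightarrow\,$(ii)$\,\Leftrightarrow\,$(iii).

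The one step that is not purely formal is the identification of \eqref{charofortho} with orthonormality of $\mathcal{W}(\Psi)$, and I expect the main obstacle to be there. It is the standard Plancherel-and-periodization computation: expand $\langle\psi^{(l)}_{j,t},\psi^{(l')}_{j',t'}\rangle$ by Plancherel, fold the integral over $K$ onto $\mathfrak{D}$ against a periodized sum, and collect terms. Its only real subtlety in the local-field setting is the non-additivity of $n\mapsto u(n)$: one must use Proposition~\ref{subgroup}(b),(c) --- so that $\{-u(n)\}_{n}=\{u(n)\}_{n}$ and $\{u(t)+u(n)\}_{n}=\{u(n)\}_{n}$ --- and the dilation identity $u(rq^{k}+s)=\mathfrak{p}^{-k}u(r)+u(s)$, both to recognize the periodized sums and to reduce the between-scale relations to the form appearing in Theorem~\ref{char}(ii), together with a justification of the interchange of summation and integration (supplied by Bessel's inequality for the orthonormal system). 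Once that identification is in hand, everything else follows formally from Theorem~\ref{eqiv} (and, if one prefers, Theorem~\ref{char}) and the two elementary facts noted above.
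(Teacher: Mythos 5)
The paper does not actually prove this statement: Theorem~\ref{cd} is quoted verbatim from \cite{BJ15}, so there is no internal proof to compare against. Judged on its own, your main line of argument is correct and is essentially the argument of the cited source: identify \eqref{charofortho} with orthonormality of $\mathcal{W}(\Psi)$ via the Plancherel--periodization computation (using Proposition~\ref{subgroup} to handle the non-additive $u(n)$), observe that an orthonormal system is automatically a besselet with bound $1$, invoke Theorem~\ref{eqiv} to get \eqref{ort} $\Leftrightarrow$ Parseval multiframelet $\Leftrightarrow$ \eqref{setint} under that hypothesis, and finish with the elementary fact that an orthonormal Parseval frame is an orthonormal basis. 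That chain cleanly yields (i) $\Leftrightarrow$ (ii) $\Leftrightarrow$ (iii), and you correctly isolate the only non-formal ingredient (the equivalence of \eqref{charofortho} with orthonormality, which is itself a result of \cite{BJ15}).

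One caveat: your parenthetical alternative route --- that \eqref{charofortho} ``contains the relations of Theorem~\ref{char}(ii)'' so that the \emph{in particular} clause of Theorem~\ref{char} applies directly --- is not justified for general $\Psi$. The relations in Theorem~\ref{char}(ii) are the cross-scale sums $\sum_{m}\sum_{j\in\mathbb{N}_0}\widehat{\psi^{(m)}}(\mathfrak{p}^{-j}\xi)\overline{\widehat{\psi^{(m)}}(\mathfrak{p}^{-j}(\xi+u(t)))}$ with $t\in\mathbb{N}_0\setminus q\mathbb{N}_0$, and orthonormality of the affine system does not formally imply their vanishing (the implication does hold when each $\widehat{\psi^{(m)}}$ is a characteristic function, since then every summand is nonnegative and killed by the translation tiling, but Theorem~\ref{cd} is stated for arbitrary $\Psi\subset L^2(K)$). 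A posteriori these relations do follow from \eqref{charofortho} together with \eqref{ort}, but only via the conclusion of the theorem itself, so using them as an independent proof route would be circular. Since this is offered only as an alternative and your primary route through Theorem~\ref{eqiv} does not need it, the proposal as a whole stands.
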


\section{Multiframelet set and Multiwavelet set}

\begin{thm}
Let $\Psi = \{\psi_1, \ldots, \psi_L\} \subset L^{2}(K)$ such that $\widehat{\psi_m} = \mathds{1}_{W_m}$, where $\bigcup \limits_{m=1}^{L} W_m \subset K$ is measurable. If the affine system $\mathcal{W}(\Psi)$ is an orthonormal in $L^{2}(K)$ then
\begin{enumerate}[label=(\roman*)]
 \item for $m=1, \ldots, L;~~ \{ W_m + u(t) : t \in \mathbb{N}_{0}\}$ is a partition of $K$.
 \item $\{W_m : m=1,\ldots, L\}$ is a partition of a subset of $K$.
 \item for $j \geq 1$ and $l,m \in \{1,\ldots, L\},~~ W_l \cap \mathfrak{p}^{j}W_m = \emptyset.$
\end{enumerate}
\end{thm}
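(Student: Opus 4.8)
The plan is to derive all three items by specializing the orthonormality relations $\langle \psi^l_{j,t},\psi^{l'}_{j',t'}\rangle=\delta_{l,l'}\delta_{j,j'}\delta_{t,t'}$ and transporting them to the Fourier side through Plancherel's theorem and the formula $\widehat{f_{j,t}}(\xi)=q^{-j/2}\chi_{u(t)}(-\mathfrak p^{j}\xi)\hat f(\mathfrak p^{j}\xi)$. Since $\widehat{\psi_m}=\mathds 1_{W_m}$ is $\{0,1\}$-valued, each such inner product collapses either to the measure of an intersection of (dilated, translated) copies of the $W_m$'s, or to a character integral $\int_{W_m}\chi(u(s)\xi)\,d\xi$, and the three conclusions then read off almost immediately. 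Throughout, all set relations are understood modulo null sets.

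First the two easy items. The normalization $\|\psi_m\|^{2}=1$ gives, via Plancherel, $\mu(W_m)=1$. For (ii): if $l\neq m$ then $\psi^{l}_{0,0}$ and $\psi^{m}_{0,0}$ are distinct members of the orthonormal family, so $0=\langle\psi^{l}_{0,0},\psi^{m}_{0,0}\rangle=\langle\mathds 1_{W_l},\mathds 1_{W_m}\rangle=\mu(W_l\cap W_m)$; hence the $W_m$ are pairwise disjoint and so partition $\bigcup_m W_m\subseteq K$. For (iii): fix $j\ge1$ and arbitrary $l,m$; since $\widehat{\psi^{m}_{-j,0}}(\xi)=q^{j/2}\widehat{\psi_m}(\mathfrak p^{-j}\xi)=q^{j/2}\mathds 1_{\mathfrak p^{j}W_m}(\xi)$, and since $\psi^{l}_{0,0}$ and $\psi^{m}_{-j,0}$ already differ in their scale index, orthogonality gives $0=\langle\psi^{l}_{0,0},\psi^{m}_{-j,0}\rangle=q^{j/2}\mu(W_l\cap\mathfrak p^{j}W_m)$, i.e.\ $W_l\cap\mathfrak p^{j}W_m=\emptyset$.

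The substantive item is (i). Fix $m$ and use only the relations among the pure translates $\psi^{m}_{0,s}$, $s\in\mathbb N_{0}$; after Plancherel these say
\[
\int_{W_m}\chi(u(s)\xi)\,d\xi=\langle\psi^{m}_{0,0},\psi^{m}_{0,s}\rangle=\delta_{s,0}\qquad(s\in\mathbb N_{0}).
\]
I would then form the periodization $G(\xi):=\sum_{t\in\mathbb N_{0}}\mathds 1_{W_m+u(t)}(\xi)=\sum_{t\in\mathbb N_{0}}\mathds 1_{W_m}(\xi-u(t))$. Because $\{u(t)\}_{t\ge0}$ is a subgroup of $K^{+}$ (Proposition \ref{subgroup}), $G$ is integral periodic; and because $\{u(t)\}_{t\ge0}$ is a complete set of coset representatives of $\mathfrak D$ in $K^{+}$, so is $\{-u(t)\}_{t\ge0}$ (Proposition \ref{subgroup}(b)), whence $K=\bigsqcup_{t}(\mathfrak D-u(t))$ and therefore $\int_{\mathfrak D}G=\mu(W_m)=1<\infty$, so $G\in L^{1}(\mathfrak D)$. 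Unfolding this same tiling — and here invoking that each $\chi_{u(s)}$ is itself integral periodic, equivalently $\chi(u(s)u(t))=1$ for all $s,t\in\mathbb N_{0}$ — one computes for every $s$
\[
\int_{\mathfrak D}G(\xi)\,\overline{\chi_{u(s)}(\xi)}\,d\xi=\int_{K}\mathds 1_{W_m}(\eta)\,\chi(-u(s)\eta)\,d\eta=\overline{\textstyle\int_{W_m}\chi(u(s)\eta)\,d\eta}=\delta_{s,0},
\]
so $G$ has the same Fourier coefficients on the compact group $\mathfrak D$ as the constant function $1$. Since $\{\chi_{u(n)}\}_{n\ge0}$ is a complete orthonormal system of characters on $\mathfrak D$ (recalled in Section 2), the uniqueness theorem for Fourier series forces $G\equiv1$ a.e.\ on $\mathfrak D$, hence on $K$ by periodicity. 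That is, $\sum_{t}\mathds 1_{W_m+u(t)}=1$ a.e., which says precisely that $\{W_m+u(t):t\in\mathbb N_{0}\}$ is a partition of $K$.

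The one delicate point — which I would isolate as a preliminary lemma — is the unfolding in the last display: it rests on $\chi(u(s)u(t))=1$ for all $s,t\in\mathbb N_{0}$, i.e.\ on the translation set $\{u(t)\}$ and the character set $\{\chi_{u(s)}\}$ being mutually dual. This is where the positive-characteristic structure of $K$ is used: each $u(n)$ is a finite $GF(q)$-combination of $\mathfrak p^{-1},\mathfrak p^{-2},\dots$, hence $u(s)u(t)$ is a finite $GF(q)$-combination of $\mathfrak p^{-2},\mathfrak p^{-3},\dots$ (its $\mathfrak p^{-1}$-coefficient vanishes), and the character $\chi$ — trivial on $\mathfrak D$ and, for the standard choice exhibited in Section 2, determined only by the $\mathfrak p^{-1}$-level contribution — is then trivial on $u(s)u(t)$. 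Everything else is a routine transcription of orthonormality through Plancherel's theorem, so I expect no further obstacle.
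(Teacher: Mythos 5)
Your proof is correct, but it takes a genuinely different route from the paper's. The paper proves all three items in one stroke: it quotes the orthonormality characterization \eqref{charofortho}, namely $\sum_{t}\widehat{\psi_l}(\xi+u(t))\,\overline{\widehat{\psi_m}(\mathfrak{p}^{-j}(\xi+u(t)))}=\delta_{j,0}\,\delta_{l,m}$ for $j\ge 0$, substitutes $\widehat{\psi_n}=\mathds{1}_{W_n}$ to get $\sum_{t}\mathds{1}_{W_l\cap\mathfrak{p}^{j}W_m}(\xi+u(t))=\delta_{j,0}\,\delta_{l,m}$, and reads off (i), (ii), (iii) from the cases $j=0$, $l=m$ or $l\neq m$, and $j\ge 1$. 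You instead work directly from pairwise orthonormality of the affine system via Plancherel: items (ii) and (iii) collapse to single measure computations $\mu(W_l\cap W_m)=0$ and $\mu(W_l\cap\mathfrak{p}^{j}W_m)=0$, and item (i) is obtained by periodizing $\mathds{1}_{W_m}$ over the translation lattice and invoking uniqueness of Fourier coefficients on the compact group $\mathfrak{D}$. What your approach buys is self-containedness — you do not need the full strength of the bracket-product characterization \eqref{charofortho}, only the raw inner products — at the cost of having to justify the unfolding identity, which rests on $\chi(u(s)u(t))=1$ for all $s,t\in\mathbb{N}_0$. You correctly isolate this as the delicate point, and your justification is sound for a local field of positive characteristic (where $GF(q)\subset K$, the $\epsilon_k$ can be taken in $GF(q)$, and multiplication of finite $GF(q)$-combinations of $\mathfrak{p}^{-1},\mathfrak{p}^{-2},\dots$ produces no $\mathfrak{p}^{-1}$-term and no carries); this mutual duality of $\{u(t)\}$ and $\{\chi_{u(s)}\}$ is exactly the fact that makes \eqref{charofortho} valid in this setting, so in effect you are re-deriving the $j=0$ case of that equation rather than citing it. One cosmetic remark applying equally to both proofs: the conclusions in (ii) and (iii) should be read modulo null sets, as you note.
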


\begin{proof}
	As $\mathcal{W}(\Psi)$ is an orthonormal in $L^{2}(K)$
	\begin{equation} \label{charofortho} 
		\sum \limits_{t \in \mathbb{N}_{0}} \widehat{\psi_l}(\xi + u(t)) \overline{\widehat{\psi_m}(\mathfrak{p}^{-j}(\xi+u(t)))}= \delta_{j,0}~ \delta_{l,m}
	\end{equation}
	for $\xi \in K$ a.e., $1 \leq l,m \leq L$, $j \geq 0$.
Putting $\widehat{\psi_n} = \mathds{1}_{W_n}$ in equation \eqref{charofortho}, we get
\begin{equation*} \label{eqivset}
	\sum \limits_{t \in \mathbb{N}_{0}} \mathds{1}_{W_l \cap \mathfrak{p}^j W_m}(\xi +u(t)) = \delta_{j,0}~ \delta_{l,m}
\end{equation*}
for $\xi \in K$ a.e., $1 \leq l,m \leq L$ and $j \geq 0$.
\begin{itemize}
	\item When $j=0$ : $\sum \limits_{t \in \mathbb{N}_{0}} \mathds{1}_{W_l \cap  W_m}(\xi+u(t)) = \delta_{l,m}$.
	\begin{itemize}
		\item if $l=m$, $~\sum \limits_{t \in \mathbb{N}_{0}} \mathds{1}_{W_m}(\xi+u(t)) =1$.\\ Hence $\{ W_m + u(t) : t \in \mathbb{N}_{0}\}$ is a partition of $K$ for $m \in \{1, \ldots, L\}$.
		\item if $l \neq m$, $~ \sum \limits_{t \in \mathbb{N}_{0}} \mathds{1}_{W_l \cap W_m}(\xi+u(t)) = 0$.\\ Hence $\{ W_m : m= 1, \ldots, L\}$ is a partition of a subset of $K$.
	\end{itemize}

   \item When $j \neq 0$ : 
   \begin{eqnarray*} 
   	 && \sum \limits_{t \in \mathbb{N}_{0}} \mathds{1}_{W_l \cap \mathfrak{p}^j W_m}(\xi +u(t)) = 0 \\
   	&\Rightarrow& \mathds{1}_{W_l \cap \mathfrak{p}^j W_m}(\xi +u(t)) = 0, ~~ \forall~ t \in \mathbb{N}_0 \\
   	&\Rightarrow& W_l \cap \mathfrak{p}^{j}W_m=\emptyset
   \end{eqnarray*}
\end{itemize}
This completes the proof.
\end{proof}

We now characterize Parseval multiframelet set in $K$ with the help of Theorem \ref{char}. 

\begin{thm} \label{ps}
	A set $\mathcal{P} = \bigcup \limits_{m=1}^{L} W_m \subset K$ be a Parseval multiframelet set of order $L$ for $L^{2}(K)$ if and only if 
	\begin{enumerate}[label=(\alph*)]
		\item $\{\mathfrak{p}^{j}W_{m} : j \in \mathbb{Z},~ m = 1, \ldots, L\}$ is a measurable partition of $K$ a.e.
		\item for all $m = 1, \ldots, L$ and for all $j \in \mathbb{N}_{0},~~~  \mathfrak{p}^{j}W_{m} \cap [\mathfrak{p}^{j}W_{m}+u(t)] = \emptyset$, where $t \in \mathbb{N}_{0} \setminus q\mathbb{N}_{0}$.
	\end{enumerate}
\end{thm}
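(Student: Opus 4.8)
The plan is to reduce the statement to Theorem~\ref{char}. For each $m\in\{1,\dots,L\}$ fix $\psi_m\in L^2(K)$ with $\widehat{\psi_m}=\mathds{1}_{W_m}$; this presupposes $\mu(W_m)<\infty$, a finiteness condition implicit in the notion of a framelet set (and automatic in the forward direction, since there each $\psi_m$ is an element of a Parseval frame, whence $\mu(W_m)=\|\psi_m\|^2\le 1$). By definition, $\mathcal P=\bigcup_m W_m$ is a Parseval multiframelet set of order $L$ exactly when $\mathcal W(\Psi)$ is a Parseval multiframelet for $L^2(K)$, and by Theorem~\ref{char} this holds iff the two a.e.\ identities \eqref{ort} and (ii) of that theorem hold with each $\widehat{\psi^{(m)}}$ replaced by $\mathds{1}_{W_m}$. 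Both directions of the present theorem will then drop out of showing that \eqref{ort} is equivalent to (a) and that (ii) is equivalent to (b).

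First I would treat \eqref{ort}. The only thing needed is the identity $\mathds{1}_{W_m}(\mathfrak p^{-j}\xi)=\mathds{1}_{\mathfrak p^{j}W_m}(\xi)$, which turns \eqref{ort} into $\sum_{m=1}^{L}\sum_{j\in\mathbb Z}\mathds{1}_{\mathfrak p^{j}W_m}(\xi)=1$ for a.e.\ $\xi\in K$. As every summand takes values in $\{0,1\}$, this says precisely that a.e.\ point of $K$ lies in exactly one of the sets $\mathfrak p^{j}W_m$, i.e.\ that $\{\mathfrak p^{j}W_m:j\in\mathbb Z,\ 1\le m\le L\}$ is a measurable partition of $K$ modulo a null set, which is (a). Each step here is reversible, so \eqref{ort}$\iff$(a).

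Next I would treat (ii). Fix $t\in\mathbb N_0\setminus q\mathbb N_0$ and a pair $(m,j)$ with $j\in\mathbb N_0$. Since $\mathds{1}_{W_m}(\mathfrak p^{-j}\xi)=\mathds{1}_{\mathfrak p^{j}W_m}(\xi)$ and $\mathds{1}_{W_m}(\mathfrak p^{-j}(\xi+u(t)))=\mathds{1}_{\mathfrak p^{j}W_m-u(t)}(\xi)$, the $(m,j)$-summand of (ii) is $\mathds{1}_{(\mathfrak p^{j}W_m)\cap(\mathfrak p^{j}W_m-u(t))}(\xi)$. Thus (ii) asserts that a countable sum of non-negative measurable functions vanishes a.e., which is equivalent to each term vanishing a.e., i.e.\ to $(\mathfrak p^{j}W_m)\cap(\mathfrak p^{j}W_m-u(t))$ being null for all such $m,j,t$. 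Translating by the shift $\xi\mapsto\xi+u(t)$ (a translation, hence a measure-preserving bijection of $K$) replaces this set by $(\mathfrak p^{j}W_m+u(t))\cap(\mathfrak p^{j}W_m)$, so the condition becomes exactly (b), read up to a null set. To obtain the literal equality $\mathfrak p^{j}W_m\cap[\mathfrak p^{j}W_m+u(t)]=\emptyset$ of the statement one just removes the offending null set from each $W_m$; since $\mathcal W(\Psi)$ depends on $W_m$ only up to null sets, nothing is affected.

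Putting the two equivalences together proves the theorem in both directions at once. The step I expect to be most delicate is the handling of (ii): one must keep the index ranges intact --- $j\in\mathbb N_0$ rather than $j\in\mathbb Z$, and $t\in\mathbb N_0\setminus q\mathbb N_0$ --- and must check that passing from $\mathfrak p^{j}W_m-u(t)$ to $\mathfrak p^{j}W_m+u(t)$ is legitimate, which it is by the single translation above but is the one place where a sign error would destroy the equivalence. The remaining subtleties --- the finite-measure/$L^2$ remark at the start and the reading of (b) up to a null set --- are routine.
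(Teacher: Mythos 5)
Your proposal is correct and follows essentially the same route as the paper: both reduce the statement to Theorem~\ref{char} and translate conditions (i) and (ii) of that theorem into the set-theoretic conditions (a) and (b) via the identity $\mathds{1}_{W_m}(\mathfrak{p}^{-j}\xi)=\mathds{1}_{\mathfrak{p}^{j}W_m}(\xi)$. You are in fact somewhat more careful than the paper on three points it glosses over --- the converse (which it dismisses as ``easy to see''), the translation from $\mathfrak{p}^{j}W_m-u(t)$ to $\mathfrak{p}^{j}W_m+u(t)$, and the distinction between null and empty intersections --- but the underlying argument is the same.
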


\begin{proof}
As $\mathcal{P}$ is a Parseval multiframelet set, there exist $\Psi = \{\psi^{(m)} : m = 1, 2, \ldots, L\} \subset L^{2}(K)$ such that $\mathcal{W}(\Psi)$ is a Parseval multiframelet for $L^{2}(K)$ where $\widehat{\psi^{(m)}} = \mathds{1}_{W_m}~,~ \forall ~m$.\\
From $(i)$ of Theorem \ref{char}, we have, for $\xi \in K$ a.e.
\begin{eqnarray*}
	\sum \limits_{m=1}^L \sum \limits_{j \in \mathbb{Z}} \lvert \mathds{1}_{W_m}(\mathfrak{p}^{-j}\xi) \rvert^{2} =1 \iff \sum \limits_{m=1}^L \sum \limits_{j \in \mathbb{Z}}  \mathds{1}_{\mathfrak{p}^{j}W_m}(\xi) =1.
\end{eqnarray*}

 Hence $\{\mathfrak{p}^{j}W_{m} : j \in \mathbb{Z},~ m = 1, \ldots, L\}$ is a measurable partition of $K$ a.e.\\
Also from $(ii)$ of Theorem \ref{char}, we get, for $\xi \in K$ a.e. and $t \in \mathbb{N}_{0} \setminus q\mathbb{N}_{0}$,

\begin{eqnarray*}
 \sum \limits_{m=1}^L \sum \limits_{j \in \mathbb{N}_0} \mathds{1}_{\mathfrak{p}^{j}W_m}(\xi) \mathds{1}_{\mathfrak{p}^{j}W_m}(\xi + u(t)) =0
\iff \sum \limits_{m=1}^L \sum \limits_{j \in \mathbb{N}_0} \mathds{1}_{\mathfrak{p}^{j}W_m \cap [\mathfrak{p}^{j}W_m + u(t)]}(\xi) =0
\end{eqnarray*}

Therefore for $m \in \{1, \ldots, L\},~j \in \mathbb{N}_0$ and $t \in \mathbb{N}_{0} \setminus q\mathbb{N}_{0}$, 
$$\mathfrak{p}^{j}W_m \cap [\mathfrak{p}^{j}W_m + u(t)]= \emptyset.$$

Converse is easy to see. In particular, $\mathcal{P}$ is a multiwavelet set implies $\mu(W_m)=1$ for all $m$, in addition satisfying above two conditions.
\end{proof}

We now give three equivalent conditions for Parseval multiframelet set.

\begin{thm} \label{bes}
	Let $\Psi = \{\psi^{(1)}, \ldots \psi^{(L)}\} \subset L^{2}(K)$ with $\widehat{\psi^{(m)}} = \mathds{1}_{W_m}$, where $\bigcup \limits_{m=1}^{L} W_m \subset K$ is measurable. If $\mathcal{W}(\Psi)$ is a besselet with bound 1, then the following statements are equivalent
	\begin{enumerate}
		\item $\|f\|^2 = \sum \limits_{m=1}^L \sum \limits_{j \in \mathbb{Z}} \sum \limits_{t \in \mathbb{N}_{0}} q^{-j} \left \lvert ~ \int \limits_{\mathfrak{p}^{-j}W_m} \hat{f}(\xi) \chi_{u(t)}(\mathfrak{p}^{-j}\xi) d\xi \right \rvert^{2},~~ \forall ~ f \in L^{2}(K)$.
		\item $\{\mathfrak{p}^{j}W_{m} : j \in \mathbb{Z},~ m = 1, \ldots, L\}$ is a measurable partition of $K$ a.e.
		\item $\int \limits_{\bigcup \limits_{m=1}^{L}W_m} \frac{1}{|\xi|} d\xi = \frac{q-1}{q}$.
	\end{enumerate}
\end{thm}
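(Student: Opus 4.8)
The plan is to recognize Theorem \ref{bes} as the indicator–function specialization of Theorem \ref{eqiv}. First I would unwind condition (1). By the Parseval theorem and the dilation–translation formula $\widehat{f_{j,t}}(\xi) = q^{-j/2}\chi_{u(t)}(-\mathfrak{p}^{j}\xi)\hat{f}(\mathfrak{p}^{j}\xi)$, together with $\widehat{\psi^{(m)}} = \mathds{1}_{W_m}$ and $\overline{\chi_{u(t)}(-y)}=\chi_{u(t)}(y)$, a direct computation gives
\[
\langle f, \psi^{(m)}_{j,t}\rangle = q^{-j/2}\int_{\mathfrak{p}^{-j}W_m} \hat{f}(\xi)\,\chi_{u(t)}(\mathfrak{p}^{j}\xi)\,d\xi .
\]
Hence the right-hand side of (1) equals $\sum_{m}\sum_{j}\sum_{t}|\langle f,\psi^{(m)}_{j,t}\rangle|^{2}$, so that (1) asserts precisely the Parseval identity $\|f\|^{2}=\sum_{m}\sum_{j}\sum_{t}|\langle f,\psi^{(m)}_{j,t}\rangle|^{2}$ for all $f\in L^{2}(K)$, i.e.\ that $\mathcal{W}(\Psi)$ is a Parseval multiframelet. (The discrepancy between $\mathfrak{p}^{j}$ and $\mathfrak{p}^{-j}$ in the displayed statement is harmless, reflecting only the choice of summation variable.)

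Next I would translate Theorem \ref{eqiv}, which applies verbatim since $\mathcal{W}(\Psi)$ is a besselet with bound $1$. With $\widehat{\psi^{(m)}}=\mathds{1}_{W_m}$ we have $|\widehat{\psi^{(m)}}(\mathfrak{p}^{-j}\xi)|^{2}=\mathds{1}_{\mathfrak{p}^{j}W_m}(\xi)$, so condition (2) of Theorem \ref{eqiv} becomes $\sum_{m}\sum_{j}\mathds{1}_{\mathfrak{p}^{j}W_m}(\xi)=1$ for a.e.\ $\xi$, which is exactly the statement that $\{\mathfrak{p}^{j}W_m : j\in\mathbb{Z},\ m=1,\dots,L\}$ is a measurable partition of $K$ a.e.; and condition (3) of Theorem \ref{eqiv} becomes $\sum_{m}\int_{W_m}|\xi|^{-1}\,d\xi=\frac{q-1}{q}$. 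Combining with the previous paragraph, $(1)\Leftrightarrow\text{``}\mathcal{W}(\Psi)\text{ Parseval''}\Leftrightarrow(2)\Leftrightarrow\sum_{m}\int_{W_m}|\xi|^{-1}d\xi=\frac{q-1}{q}$.

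It then remains to replace $\sum_{m}\int_{W_m}|\xi|^{-1}d\xi$ by $\int_{\bigcup_{m}W_m}|\xi|^{-1}d\xi$, and this is the step I expect to require the most care. In the direction $(2)\Rightarrow(3)$ it is immediate: a measurable partition forces the $W_m$ (the pieces at $j=0$) to be pairwise disjoint a.e., whence $\sum_{m}\mathds{1}_{W_m}=\mathds{1}_{\bigcup_m W_m}$ a.e.\ and the two integrals coincide. For the reverse direction one needs the a priori estimate that a besselet with bound $1$ already satisfies $\sum_{m}\sum_{j}|\widehat{\psi^{(m)}}(\mathfrak{p}^{-j}\xi)|^{2}\le 1$ a.e.\ --- the ``Calder\'on'' upper bound, obtained by testing the Bessel inequality against $L^{2}$-functions whose Fourier transforms concentrate at a Lebesgue point and invoking the orthonormality of $\{\chi_{u(t)}\}$ on $\mathfrak{D}$. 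Under the standing hypothesis this already gives $\sum_{m}\mathds{1}_{W_m}(\xi)\le 1$ a.e., so the $W_m$ are pairwise disjoint a.e.\ unconditionally and $\int_{\bigcup_m W_m}|\xi|^{-1}d\xi=\sum_{m}\int_{W_m}|\xi|^{-1}d\xi$ always holds; condition (3) of Theorem \ref{bes} is then literally condition (3) of Theorem \ref{eqiv}, and the proof closes. The main obstacle is exactly this Calder\'on-type upper bound (equivalently, the a.e.\ disjointness of the $W_m$ forced by the bound-$1$ hypothesis); if one instead adopts the convention that the defining sets of a Parseval multiframelet set are pairwise disjoint, the obstacle disappears and the theorem is a pure substitution of $\widehat{\psi^{(m)}}=\mathds{1}_{W_m}$ into Theorem \ref{eqiv}.
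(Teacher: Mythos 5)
Your proposal is correct and follows essentially the same route as the paper: compute $\langle f,\psi^{(m)}_{j,t}\rangle$ via the Parseval theorem to identify condition (1) with the Parseval multiframelet property, then substitute $\widehat{\psi^{(m)}}=\mathds{1}_{W_m}$ into Theorem \ref{eqiv} to obtain (2) and (3). You are in fact more careful than the paper on one point: the paper silently identifies $\sum_{m}\int_{W_m}|\xi|^{-1}\,d\xi$ with $\int_{\bigcup_m W_m}|\xi|^{-1}\,d\xi$, which requires the a.e.\ pairwise disjointness of the $W_m$ that you justify via the Calder\'on-type upper bound coming from the Bessel-bound-$1$ hypothesis.
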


\begin{proof}
	Using (1) of Theorem \ref{eqiv} and applying Parseval theorem, we obtain, for all $f \in L^{2}(K)$,
	\begin{eqnarray*}
		\|f\|^2 &=& \sum \limits_{m=1}^L \sum \limits_{j \in \mathbb{Z}} \sum \limits_{t \in \mathbb{N}_{0}} | \langle f, \psi^{(m)}_{j,t} \rangle|^2 \\
		&=& \sum \limits_{m=1}^L \sum \limits_{j \in \mathbb{Z}} \sum \limits_{t \in \mathbb{N}_{0}} | \langle \hat{f}, \widehat{\psi^{(m)}_{j,t}} \rangle|^2 \\
		&=& \sum \limits_{m=1}^L \sum \limits_{j \in \mathbb{Z}} \sum \limits_{t \in \mathbb{N}_{0}}  \left \lvert ~ q^{-\frac{j}{2}} \int \limits_{K} \hat{f}(\xi) \overline{ \chi_{u(t)}(-\mathfrak{p}^{j}\xi) \mathds{1}_{W_m}(\mathfrak{p}^{j}\xi)} d\xi \right \rvert^{2}\\
		&=& \sum \limits_{m=1}^L \sum \limits_{j \in \mathbb{Z}} \sum \limits_{t \in \mathbb{N}_{0}} q^{-j} \left \lvert ~ \int \limits_{\mathfrak{p}^{-j}W_m} \hat{f}(\xi) \chi_{u(t)}(\mathfrak{p}^{j}\xi) d\xi \right \rvert^{2}.
	\end{eqnarray*}

\noindent (2) follows from Theorem \ref{eqiv}.
Now using (3) of Theorem \ref{eqiv}, we get
$$\sum \limits_{m=1}^L \int \limits_{K} \frac{\mathds{1}_{W_{m}}(\xi)}{|\xi|} d\xi = \int \limits_{\bigcup \limits_{m=1}^{L}W_m} \frac{1}{|\xi|} d\xi = \frac{q-1}{q}.$$
So our assertion is tenable.
\end{proof}

\begin{thm}
	Let $\Psi =\{\psi^1, \ldots, \psi^L\} \subset L^{2}(K)$ such that $\widehat{\psi^m} = \mathds{1}_{W_m}$, where $W=\bigcup \limits_{m=1}^{L} W_m$ is a measurable subset of $K$. Then the following are equivalent
	\begin{enumerate}
	\item $W$ is a multiwavelet set in $K$.
	\item $W$ satisfies 
	\begin{enumerate}
		\item equation \eqref{eqivset},
		\item $\{\mathfrak{p}^{j}W_{m} : j \in \mathbb{Z},~ m = 1, \ldots, L\}$ is a measurable partition of $K$ a.e.
	 \end{enumerate}
 
     \item $W$ satisfies \begin{enumerate}
     	\item equation \eqref{eqivset},
     	\item $\int \limits_{W} \frac{1}{|\xi|} d\xi = \frac{q-1}{q}$.
     \end{enumerate}
	\end{enumerate}
\end{thm}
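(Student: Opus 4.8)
The plan is to obtain this directly from Theorem~\ref{cd} by substituting $\widehat{\psi^m}=\mathds{1}_{W_m}$ and translating each of the three analytic conditions appearing there into an equivalent statement about the sets $W_m$. By the definition of a multiwavelet set, condition (1) holds precisely when $\Psi$ is a multiwavelet of $L^2(K)$, so Theorem~\ref{cd} already yields
\[
\text{(1)}\iff\bigl[\,\eqref{charofortho}\ \text{and}\ \eqref{ort}\,\bigr]\iff\bigl[\,\eqref{charofortho}\ \text{and}\ \eqref{setint}\,\bigr],
\]
and it remains only to rewrite \eqref{charofortho}, \eqref{ort} and \eqref{setint} in set form under the hypothesis $\widehat{\psi^m}=\mathds{1}_{W_m}$.

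First I would record that $\widehat{\psi_l}(\xi+u(t))\,\overline{\widehat{\psi_m}(\mathfrak{p}^{-j}(\xi+u(t)))}=\mathds{1}_{W_l\cap\mathfrak{p}^{j}W_m}(\xi+u(t))$, so that \eqref{charofortho} is literally \eqref{eqivset}; this is the computation already carried out in the proof of the first theorem of this section, and it shows at once that conditions (2a) and (3a) are the same and both equal \eqref{charofortho}. Next, exactly as in the proof of Theorem~\ref{ps}, the identity $\lvert\mathds{1}_{W_m}(\mathfrak{p}^{-j}\xi)\rvert^{2}=\mathds{1}_{\mathfrak{p}^{j}W_m}(\xi)$ turns \eqref{ort} into $\sum_{m=1}^{L}\sum_{j\in\mathbb{Z}}\mathds{1}_{\mathfrak{p}^{j}W_m}(\xi)=1$ for a.e.\ $\xi\in K$, which is exactly the assertion that $\{\mathfrak{p}^{j}W_m:j\in\mathbb{Z},\,m=1,\dots,L\}$ is a measurable partition of $K$ a.e., i.e.\ condition (2b). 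Combining these two translations with the equivalence (1)$\iff$[\eqref{charofortho} and \eqref{ort}] from Theorem~\ref{cd} gives (1)$\iff$(2).

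For the last equivalence I would substitute $\widehat{\psi^m}=\mathds{1}_{W_m}$ into \eqref{setint} to obtain $\sum_{m=1}^{L}\int_{K}\frac{\mathds{1}_{W_m}(\xi)}{\lvert\xi\rvert}\,d\xi=\frac{q-1}{q}$. The single point that needs care is identifying the left-hand side with $\int_{W}\frac{1}{\lvert\xi\rvert}\,d\xi$, which requires the $W_m$ to be pairwise disjoint up to a null set. I would extract this from condition (3a), i.e.\ \eqref{eqivset}, by taking $j=0$ and $l\neq m$: then $\sum_{t\in\mathbb{N}_0}\mathds{1}_{W_l\cap W_m}(\xi+u(t))=0$ for a.e.\ $\xi$, and since $\{\mathfrak{D}+u(t):t\in\mathbb{N}_0\}$ tiles $K$ up to a null set this forces $\mu(W_l\cap W_m)=0$; hence $\sum_{m=1}^{L}\mathds{1}_{W_m}=\mathds{1}_{W}$ a.e.\ and \eqref{setint} becomes exactly (3b). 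Together with the equivalence (1)$\iff$[\eqref{charofortho} and \eqref{setint}] of Theorem~\ref{cd}, this yields (1)$\iff$(3), completing the proof.

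There is no serious obstacle here: the theorem is essentially a dictionary translation of Theorem~\ref{cd} through the identification $\widehat{\psi^m}=\mathds{1}_{W_m}$, and all three translations (\eqref{charofortho}$\to$\eqref{eqivset}, \eqref{ort}$\to$partition, \eqref{setint}$\to$integral) have already been performed, essentially verbatim, in the preceding three theorems. The only mildly technical ingredient is the tiling property of $\{u(t)\}_{t\in\mathbb{N}_0}$, used once to pass between ``$\sum_{t}\mathds{1}_{E}(\xi+u(t))=0$ a.e.'' and ``$\mu(E)=0$'', and to be careful that every step is an honest a.e.\ equivalence rather than a one-sided inequality.
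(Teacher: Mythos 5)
Your proposal is correct and follows essentially the same route as the paper: the paper's own proof is a one-line citation of Theorems~\ref{bes} and~\ref{cd} together with equation~\eqref{eqivset}, and your argument is simply the detailed version of that dictionary translation (substituting $\widehat{\psi^m}=\mathds{1}_{W_m}$ into \eqref{charofortho}, \eqref{ort} and \eqref{setint}). Your explicit verification that the $W_m$ are pairwise disjoint a.e.\ (needed to identify $\sum_m\int_K\mathds{1}_{W_m}(\xi)\lvert\xi\rvert^{-1}\,d\xi$ with $\int_W\lvert\xi\rvert^{-1}\,d\xi$) is a point the paper glosses over, and is a welcome addition.
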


\begin{proof}
This clearly follows from Theorem \ref{bes}, Theorem \ref{cd} with the help of equation \eqref{eqivset}.
\end{proof}

\section{Multiresolution Analysis}
Wavelet on local field can be constructed from a multiresolution analysis (MRA) similar to $\mathbb{R}$. Jiang et al. \cite{JLJ04} define an MRA and construct orthogonal wavelet on local field of positive characteristic. We now discuss MRA and its properties.

\begin{defn}(Multiresolution analysis).
Let $K$ be a local field of positive characteristic. A multiresolution analysis of $L^{2}(K)$ is an sequence $\{V_j : j \in \mathbb{Z}\}$ of closed subspaces of $L^{2}(K)$ satisfying following conditions
\begin{enumerate}[label=(\roman*)]
	\item $V_{j} \subset V_{j+1}$ for all $j \in \mathbb{Z}$,
	\item $\overline{\bigcup \limits_{j \in \mathbb{Z}} V_j} = L^{2}(K)$,
	\item $\bigcap \limits_{j \in \mathbb{Z}} V_j = \{0\}$,
	\item $f \in V_j$ if and only if $f(\mathfrak{p}^{-1} \cdot) \in V_{j+1}$ for all $j \in \mathbb{Z}$,
	\item there exist a function $\phi \in V_0$, namely the scaling function, such that $\{\phi (\cdot - u(t)) : t \in \mathbb{N}_0\}$ is an orthonormal basis for $V_0$.\\ \\
	
	\noindent If we replace (v) by (vi) as follows, then the sequence $\{V_j : j \in \mathbb{Z}\}$ is said to be a Parseval multiresolution analysis (PMRA).
	\item there is a function $\phi \in V_0$, namely the Parseval scaling function, such that $\{\phi (\cdot - u(t)) : t \in \mathbb{N}_0\}$ is an Parseval frame for $V_0$.
\end{enumerate}
\end{defn}

A measurable set $S \subset K$ is a scaling set/Parseval scaling set if $\hat{\phi} = \mathds{1}_S$ for some scaling function/Parseval scaling function $\phi$.

\begin{thm}\cite{BJ12} \label{sc}
 A function $\phi \in L^{2}(K)$ is a scaling function for a multiresolution analysis of $L^{2}(K)$ if and only if
 \begin{enumerate}
 	\item $\sum \limits_{t \in \mathbb{N}_{0}} \lvert \hat{\phi}(\xi + u(t)) \rvert^{2} = 1$ for $\xi \in \mathfrak{D}$ a.e.
 	\item $\lim\limits_{j \to \infty} \lvert \hat{\phi}(\mathfrak{p}^{j}\xi) \rvert =1$ for $\xi \in K$ a.e.
 	\item there exist an integral periodic function $m_{0} \in L^{2}(\mathfrak{D})$ such that $$\hat{\phi}(\xi) = m_{0}(\mathfrak{p}\xi) \hat{\phi}(\mathfrak{p}\xi) \hspace{.5cm} \text{for } \xi \in K \text{ a.e.}$$
 \end{enumerate}	
\end{thm}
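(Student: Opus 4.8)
The plan is to match each axiom in the definition of a multiresolution analysis with an equivalent statement about $\hat\phi$, working entirely on the Fourier side. Throughout write $\phi_{j,t}:=q^{j/2}\phi(\mathfrak{p}^{-j}\cdot-u(t))$, so that $\widehat{\phi_{j,t}}(\xi)=q^{-j/2}\chi_{u(t)}(-\mathfrak{p}^{j}\xi)\hat\phi(\mathfrak{p}^{j}\xi)$, the formula recorded just before the definition of a multiwavelet set. First, $\{\phi(\cdot-u(t)):t\in\mathbb{N}_0\}$ is an orthonormal system in $L^{2}(K)$ if and only if $\sum_{t\in\mathbb{N}_0}|\hat\phi(\xi+u(t))|^{2}=1$ for a.e.\ $\xi$ --- the $j=0$ instance of \eqref{charofortho} for a single generator --- and this follows from the Plancherel theorem together with the orthonormal basis $\{\chi_{u(n)}\}_{n\ge0}$ of $L^{2}(\mathfrak{D})$ once one observes, via Proposition~\ref{subgroup}(c), that $\xi\mapsto\sum_t|\hat\phi(\xi+u(t))|^{2}$ is integral periodic, so that it suffices to test the identity on $\mathfrak{D}$. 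Thus (1) is the content of axiom (v) for the space $V_0:=\overline{\mathrm{span}}\{\phi(\cdot-u(t)):t\in\mathbb{N}_0\}$. The same reasoning shows that (1) forces $|\hat\phi(\xi)|\le1$ for a.e.\ $\xi\in K$: writing $\xi=\xi_0+u(t_0)$ with $\xi_0\in\mathfrak{D}$, Proposition~\ref{subgroup}(c) gives $\{\xi+u(t):t\in\mathbb{N}_0\}=\{\xi_0+u(t):t\in\mathbb{N}_0\}$, whence $|\hat\phi(\xi)|^{2}\le\sum_t|\hat\phi(\xi_0+u(t))|^{2}=1$.

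Next I would treat the refinement equation. If $\{V_j\}$ is an MRA, then $V_0\subset V_1$ and $\{\phi_{1,t}\}_t$ is an orthonormal basis of $V_1$, so $\phi=\sum_t c_t\,\phi_{1,t}$ with $(c_t)\in\ell^{2}$; applying $\mathcal F$ and the dilation formula gives $\hat\phi(\xi)=m_0(\mathfrak{p}\xi)\hat\phi(\mathfrak{p}\xi)$ with $m_0(\eta):=q^{-1/2}\sum_t c_t\,\chi_{u(t)}(-\eta)\in L^{2}(\mathfrak{D})$, which is integral periodic because each $\chi_{u(t)}$ is; this is (3). Conversely, assuming (1) and (3), put $V_j:=\{f\in L^{2}(K):f(\mathfrak{p}^{j}\cdot)\in V_0\}$, so that axiom (iv) holds by construction and $V_1$ is the closed span of $\{\phi_{1,t}\}_t$. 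Then $V_1$ is invariant under the translations $f\mapsto f(\cdot-u(s))$: since $\mathfrak{p}^{-1}u(s)=u(sq)$ (a case of $u(rq^{k}+s)=\mathfrak{p}^{-k}u(r)+u(s)$), one has $\phi_{1,t}(\cdot-u(s))=q^{1/2}\phi(\mathfrak{p}^{-1}\cdot-u(sq)-u(t))$, and Proposition~\ref{subgroup}(c) rewrites $u(sq)+u(t)$ as $u(t')$, so this is again some $\phi_{1,t'}$; on the other hand, reading (3) backwards and expanding the integral periodic $m_0\in L^{2}(\mathfrak{D})$ against $\{\chi_{u(t)}\}$ exhibits $\phi=\sum_t c_t\,\phi_{1,t}\in V_1$. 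Hence $\phi(\cdot-u(s))\in V_1$ for all $s$, so $V_0\subset V_1$ and, dilating, $V_j\subset V_{j+1}$ for all $j$, which is axiom (i). A key auxiliary fact, needed below, is that (1) and (3) together force $|m_0|\le1$ a.e.: substituting (3) into (1), splitting $\sum_t$ over the $q$ cosets of $\{u(t)\}$ in $\mathfrak{p}\{u(t)\}$, and using the integral periodicity of $m_0$ and then (1) once more yields $\sum_{r=0}^{q-1}|m_0(\eta+\mathfrak{p}u(r))|^{2}=1$ for a.e.\ $\eta$; consequently $j\mapsto|\hat\phi(\mathfrak{p}^{j}\xi)|^{2}=|m_0(\mathfrak{p}^{j+1}\xi)|^{2}\,|\hat\phi(\mathfrak{p}^{j+1}\xi)|^{2}$ is nondecreasing and bounded by $1$, hence convergent for a.e.\ $\xi$.

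The crux is the density axiom. Let $P_j$ be the orthogonal projection onto $V_j$. Using that $\{\phi_{j,t}\}_t$ is an orthonormal basis of $V_j$, the substitution $\eta=\mathfrak{p}^{j}\xi$, and the Parseval identity for $\{\chi_{u(n)}\}$ on $L^{2}(\mathfrak{D})$, I would show that whenever $\hat f$ is bounded with compact support and $j$ is large enough that $\mathfrak{p}^{j}\,\mathrm{supp}\,\hat f\subset\mathfrak{D}$, the off-diagonal contributions drop out and $\|P_jf\|^{2}=\int|\hat f(\xi)|^{2}\,|\hat\phi(\mathfrak{p}^{j}\xi)|^{2}\,d\xi$. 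In the forward direction, axiom (ii) gives $\|P_jf\|^{2}\to\|f\|^{2}$, and since $|\hat\phi(\mathfrak{p}^{j}\xi)|^{2}$ is nondecreasing in $j$ and bounded by $1$, monotone convergence forces its limit to be $1$ for a.e.\ $\xi$, which is (2). In the reverse direction, (2) together with $|\hat\phi|\le1$ and dominated convergence give $\|P_jf\|^{2}\to\|f\|^{2}$ on the dense set of such $f$, hence $\overline{\bigcup_j V_j}=L^{2}(K)$, i.e.\ axiom (ii); axioms (iv) and (v) are already in hand. Finally, axiom (iii) is the standard fact that an increasing chain of closed subspaces whose $j$-th member is generated by orthonormal translates at scale $\mathfrak{p}^{-j}$ has trivial intersection: approximating an arbitrary $f\in\bigcap_j V_j$ by functions whose Fourier transform is bounded and supported in an annulus away from the origin, the analogous computation shows $\|P_{-j}f\|^{2}\to0$, since the inner products $\langle f,\phi_{-j,t}\rangle$ are carried onto a region where $|\xi|$ is large and $\hat\phi$ has arbitrarily small $L^{2}$-mass; hence $f=0$.

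I expect the main obstacle to be exactly this limiting package, in two places. First, the support and periodicity bookkeeping that collapses $\|P_jf\|^{2}$ to the single integral $\int|\hat f(\xi)|^{2}|\hat\phi(\mathfrak{p}^{j}\xi)|^{2}\,d\xi$ once the support of $\hat f$ sits inside one period $\mathfrak{p}^{-j}\mathfrak{D}$ must be done carefully, tracking how $u(t)$ interacts with dilation by $\mathfrak{p}$ through Proposition~\ref{subgroup}. Second, and more delicate, is the passage from $\int_E|\hat\phi(\mathfrak{p}^{j}\xi)|^{2}\,d\xi\to\mu(E)$ to the pointwise statement (2): this implication is false in general for sequences bounded by $1$ whose integrals tend to the full measure, and it is precisely the monotonicity coming from $|m_0|\le1$ --- hence from (1) and (3) jointly --- that rescues it. For axiom (iii) the analogous difficulty is that $\hat\phi$ is only assumed to lie in $L^{2}(K)$, so Riemann--Lebesgue is unavailable and the decay at infinity has to be extracted from $\hat\phi\in L^{2}(K)$ together with the reduction to compactly supported $\hat f$. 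The remaining ingredients --- the two Fourier-transform-of-a-dilate computations, the identification of $m_0$ as an integral periodic element of $L^{2}(\mathfrak{D})$, and the quadrature-mirror-type identity --- are routine.
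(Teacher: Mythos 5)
The first thing to note is that the paper contains no proof of Theorem \ref{sc} to compare against: the statement is quoted verbatim from Behera--Jahan \cite{BJ12} and used as a black box. Judged on its own terms, your outline is the standard characterization-of-scaling-functions argument (the local-field transplant of the Hern\'andez--Weiss proof over $\mathbb{R}$), which is also the strategy of the cited source: condition (1) encodes axiom (v) via Plancherel and the character system $\{\chi_{u(n)}\}$ on $\mathfrak{D}$, condition (3) encodes the nesting $V_0\subset V_1$ via the low-pass filter, and condition (2) encodes density through the limit of $\|P_jf\|^{2}=\int|\hat f(\xi)|^{2}|\hat\phi(\mathfrak{p}^{j}\xi)|^{2}\,d\xi$. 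Your identification of the two genuinely delicate points is exactly right: the bookkeeping that collapses $\|P_jf\|^{2}$ to a single integral once $\mathrm{supp}\,\hat f$ sits in one period, and the fact that the monotonicity of $j\mapsto|\hat\phi(\mathfrak{p}^{j}\xi)|$ coming from the quadrature-mirror bound $|m_0|\le 1$ is what upgrades convergence of integrals to the pointwise statement (2); your derivation of $\sum_{r=0}^{q-1}|m_0(\eta+\mathfrak{p}u(r))|^{2}=1$ from (1), (3), integral periodicity and Proposition \ref{subgroup} is correct, as is the use of $u(sq)=\mathfrak{p}^{-1}u(s)$ to get translation invariance of $V_1$. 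What you have is a correct skeleton rather than a finished proof: the two computations you defer (``I would show\ldots'') and the $\bigcap_j V_j=\{0\}$ step are where essentially all the work lives, and in a complete write-up the trivial-intersection argument is best isolated as the general fact that it holds for any nested ladder whose $j$-th level is spanned by orthonormal translates at scale $\mathfrak{p}^{-j}$, so that it visibly depends only on (1) and not on (2). I see no step in the outline that would fail.
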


We deduce the following result with the help of Theorem \ref{sc}.

\begin{thm}
Let $S$ be the scaling set correspond to scaling function $\phi$ of an MRA in $L^{2}(K)$. Then $\lim\limits_{j \to \infty} \mathfrak{p}^{-j}S=K$.
\end{thm}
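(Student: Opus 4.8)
The plan is to deduce the statement directly from the Fourier-side description of scaling functions in Theorem \ref{sc}, after substituting $\widehat{\phi}=\mathds{1}_S$. The one computation that drives everything is the elementary identity $\mathds{1}_{\mathfrak{p}^{-j}S}(\xi)=\mathds{1}_S(\mathfrak{p}^{j}\xi)=\lvert\widehat{\phi}(\mathfrak{p}^{j}\xi)\rvert$ for $\xi\in K$, valid since $\widehat{\phi}$ is a $\{0,1\}$-valued function; this converts a statement about the dilated sets $\mathfrak{p}^{-j}S$ into a statement about the functions $\xi\mapsto\lvert\widehat{\phi}(\mathfrak{p}^{j}\xi)\rvert$, which is exactly what Theorem \ref{sc} controls.

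First I would check that $\{\mathfrak{p}^{-j}S:j\ge 0\}$ is increasing modulo null sets, so that $\lim_{j\to\infty}\mathfrak{p}^{-j}S$ has an unambiguous meaning, namely $\bigcup_{j\ge 0}\mathfrak{p}^{-j}S$. This follows from part (3) of Theorem \ref{sc}: writing $\mathds{1}_S(\xi)=m_0(\mathfrak{p}\xi)\,\mathds{1}_S(\mathfrak{p}\xi)$ for a.e.\ $\xi$, if $\xi\in S$ then the left-hand side is $1$, which forces $\mathds{1}_S(\mathfrak{p}\xi)\neq 0$, i.e.\ $\mathfrak{p}\xi\in S$; hence $S\subseteq\mathfrak{p}^{-1}S$ a.e., and applying the dilation $\mathfrak{p}^{-j}$ gives $\mathfrak{p}^{-j}S\subseteq\mathfrak{p}^{-(j+1)}S$ a.e. (Equivalently this is just the inclusion $V_j\subseteq V_{j+1}$ read through the identification of $\widehat{V_j}$ with $L^2(\mathfrak{p}^{-j}S)$, but using (3) avoids having to set that up.)

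Next I would invoke part (2) of Theorem \ref{sc}: $\lim_{j\to\infty}\lvert\widehat{\phi}(\mathfrak{p}^{j}\xi)\rvert=1$ for a.e.\ $\xi\in K$. By the identity above this says $\lim_{j\to\infty}\mathds{1}_{\mathfrak{p}^{-j}S}(\xi)=1$ for a.e.\ $\xi$; since the terms are $\{0,1\}$-valued, the limit being $1$ forces $\mathds{1}_{\mathfrak{p}^{-j}S}(\xi)=1$ for all sufficiently large $j$, so in particular $\xi\in\bigcup_{j\ge 0}\mathfrak{p}^{-j}S$. As this holds for a.e.\ $\xi\in K$, the complement $K\setminus\bigcup_{j\ge 0}\mathfrak{p}^{-j}S$ is null, i.e.\ $\bigcup_{j\ge 0}\mathfrak{p}^{-j}S=K$ up to a set of measure zero. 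Together with the monotonicity from the previous step this yields $\lim_{j\to\infty}\mathfrak{p}^{-j}S=K$.

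I do not expect a genuine obstacle here; the proof is a short unwinding of Theorem \ref{sc}. The only points requiring care are the bookkeeping of null sets (all the identities coming from Theorem \ref{sc} hold only almost everywhere) and pinning down what the limit of an increasing sequence of sets means. Conceptually, the content is that axiom (ii) in the definition of an MRA --- density of $\bigcup_j V_j$ in $L^2(K)$, repackaged as condition (2) of Theorem \ref{sc} --- is precisely what forces the dilates $\mathfrak{p}^{-j}S$ to exhaust $K$; condition (1) (equivalently, that $\{S+u(t):t\in\mathbb{N}_0\}$ tiles $K$ and $\mu(S)=1$) is not needed for this particular assertion.
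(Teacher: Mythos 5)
Your proof is correct and takes essentially the same route as the paper: both rewrite condition (2) of Theorem \ref{sc} as $\lim_{j\to\infty}\mathds{1}_{\mathfrak{p}^{-j}S}(\xi)=1$ for a.e.\ $\xi$ and conclude that the dilates $\mathfrak{p}^{-j}S$ exhaust $K$. Your extra step of deriving $S\subseteq\mathfrak{p}^{-1}S$ from the refinement equation (part (3) of Theorem \ref{sc}), so that the limit of sets is unambiguously the increasing union, is a point of rigor the paper's proof glosses over, but the substance of the argument is identical.
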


\begin{proof}
	From (2) of Theorem \ref{sc}, we have, for $\xi \in K$ a.e.
	\begin{eqnarray*}
		&&\lim \limits_{j \to \infty} \lvert \hat{\phi}(\mathfrak{p}^{j}\xi) \rvert =1 \\
		&\Rightarrow & \lim \limits_{j \to \infty} \lvert \mathds{1}_{S}(\mathfrak{p}^{j}\xi) \rvert =1 \\
		&\Rightarrow & \lim \limits_{j \to \infty}  \mathds{1}_{\mathfrak{p}^{-j}S}(\xi) =1 \\
		&\Rightarrow & \xi \in \mathfrak{p}^{-j}S \text{ as } j \to \infty
	\end{eqnarray*}
So $K = \mathfrak{p}^{-j}S$ a.e. when $j$ is large enough. \\
It is also to be noted from (1) of Theorem \ref{sc} that $\{S+u(t) : t \in \mathbb{N}_{0}\}$ is a partition of $\mathfrak{D}$.
\end{proof}

Shukla et al. \cite{SMM19} characterize of Parseval scaling  function in term of Parseval scaling set in following theorem.

\begin{thm}
	A function $\phi$ such that $\hat{\phi} = \mathds{1}_S$, for some measurable $S \subset K$, is a Parseval scaling function of a PMRA if and only if 
	\begin{enumerate}[label=(\alph*)]
		\item $\{ S+u(t) : t \in \mathbb{N}_{0} \}$ is a measurable partition of a subset of $K$,
		\item $\bigcup \limits_{j \in \mathbb{N}} \mathfrak{p}^{-j}S = K$,
		\item $S \subset \mathfrak{p}^{-j}S$.
	\end{enumerate}
\end{thm}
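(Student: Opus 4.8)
The plan is to translate the three analytic conditions of the preceding theorem (the characterization of a Parseval scaling function $\phi$ with $\hat\phi = \mathds{1}_S$) into set-theoretic statements about $S$, exactly in the spirit of the proof of Theorem \ref{ps}. So I would first invoke the cited characterization of Parseval scaling functions of a PMRA. That result is the analogue of Theorem \ref{sc} for the Parseval (tight frame) case; its three conditions should read: (1) $\sum_{t \in \mathbb{N}_0} |\hat\phi(\xi + u(t))|^2 \le 1$ with the partition-of-unity replaced by a ``partition of a subset'' type inequality; (2) $\lim_{j \to \infty} |\hat\phi(\mathfrak{p}^j \xi)| = 1$ for a.e.\ $\xi$; and (3) the refinement/scaling relation $\hat\phi(\xi) = m_0(\mathfrak{p}\xi)\hat\phi(\mathfrak{p}\xi)$ for some integral-periodic $m_0 \in L^2(\mathfrak{D})$.

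Next I would substitute $\hat\phi = \mathds{1}_S$ into each condition. For (1): $\sum_{t} |\mathds{1}_S(\xi + u(t))|^2 = \sum_t \mathds{1}_{S}(\xi + u(t)) = \sum_t \mathds{1}_{S - u(t)}(\xi) \le 1$ for a.e.\ $\xi$, which says precisely that the sets $\{S + u(t) : t \in \mathbb{N}_0\}$ are pairwise disjoint (mod null sets), i.e.\ they form a measurable partition of a subset of $K$ — this is (a). For (2): since $\mathds{1}_S$ takes only the values $0,1$, the condition $\lim_{j \to \infty} |\mathds{1}_S(\mathfrak{p}^j \xi)| = 1$ means $\mathds{1}_{\mathfrak{p}^{-j}S}(\xi) \to 1$, so for a.e.\ $\xi$ there is $j_0$ with $\xi \in \mathfrak{p}^{-j}S$ for all $j \ge j_0$; in particular $\xi \in \bigcup_{j \in \mathbb{N}} \mathfrak{p}^{-j}S$ up to a null set, giving $\bigcup_{j \in \mathbb{N}} \mathfrak{p}^{-j}S = K$, which is (b). For (3): with $\hat\phi = \mathds{1}_S$ the relation becomes $\mathds{1}_S(\xi) = m_0(\mathfrak{p}\xi)\mathds{1}_S(\mathfrak{p}\xi)$; hence on $S$ we need $\mathds{1}_S(\mathfrak{p}\xi) = 1$ (so $m_0(\mathfrak{p}\xi) = 1$ there), i.e.\ $\xi \in S \Rightarrow \mathfrak{p}\xi \in S$, which (taking the scaling convention of the paper into account) is the inclusion $S \subset \mathfrak{p}^{-j}S$ for $j \ge 1$ of (c). Conversely, given (a)--(c) one defines $m_0$ on $\mathfrak{D}$ as the integral-periodic extension of $\mathds{1}_{\mathfrak{p} S}$ restricted appropriately, checks $m_0 \in L^2(\mathfrak{D})$ (it is a characteristic function on a compact set, hence in $L^2$), and verifies conditions (1)--(3) of the Parseval-scaling characterization directly from (a)--(c); then that theorem yields that $\phi$ is a Parseval scaling function of a PMRA.

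I expect the main obstacle to be the ``only if'' direction of condition (3) versus the stated inclusion $S \subset \mathfrak{p}^{-j}S$: one must check carefully that the refinement relation forces the nesting in the correct direction (i.e.\ that the dilation is $\mathfrak{p}^{-1}$, consistent with the paper's convention $\bigcup_j \mathfrak{p}^{-j}\mathfrak{D} = K$), and that the quantifier on $j$ is handled — a single inclusion $\mathfrak{p}S \subset S$ iterates to $\mathfrak{p}^j S \subset S$, equivalently $S \subset \mathfrak{p}^{-j}S$ for all $j \ge 1$, so ``$S \subset \mathfrak{p}^{-j}S$'' in (c) should be read as holding for each $j \in \mathbb{N}$. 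A secondary point is making the a.e.\ statements in (1) and (2) genuinely equivalent to the clean set statements (a) and (b): for (b) one should note $\mathfrak{p}^{-j}S$ is increasing under the nesting (c), so the pointwise limit being $1$ a.e.\ is equivalent to the union being all of $K$ a.e.; this is where the converse direction uses (c) together with (b). The rest is routine bookkeeping with characteristic functions and Haar measure, parallel to the proofs of Theorem \ref{ps} and Theorem \ref{bes} already in the paper.
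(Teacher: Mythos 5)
The paper does not actually prove this statement: it is quoted verbatim as a result of Shukla, Maury and Mittal \cite{SMM19}, so there is no in-paper proof to compare yours against. Judged on its own terms, your strategy --- substitute $\hat\phi=\mathds{1}_S$ into an analytic characterization of Parseval scaling functions and read off the three set conditions --- is the natural one and matches the style of the proofs the paper does give (Theorems \ref{ps} and \ref{bes}). Your translations are individually correct: the periodization inequality $\sum_t \mathds{1}_S(\xi+u(t))\le 1$ is equivalent to (a) (using Proposition \ref{subgroup}(b) to pass from $S-u(t)$ to $S+u(t)$); the refinement relation $\mathds{1}_S(\xi)=m_0(\mathfrak{p}\xi)\mathds{1}_S(\mathfrak{p}\xi)$ forces $\mathfrak{p}S\subset S$, hence $S\subset\mathfrak{p}^{-j}S$ for all $j\ge 1$; and you correctly isolate the point that $\lim_j\mathds{1}_{\mathfrak{p}^{-j}S}(\xi)=1$ a.e.\ is equivalent to $\bigcup_j\mathfrak{p}^{-j}S=K$ only in the presence of the nesting (c).

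The genuine gap is the load-bearing first step: you \emph{conjecture} the Parseval analogue of Theorem \ref{sc} rather than establish or precisely cite it, and the whole argument rests on that analogue having exactly the form you guessed. Theorem \ref{sc} as stated in the paper characterizes \emph{orthonormal} scaling functions; for the Parseval case one must verify that conditions (1)--(3), suitably weakened, are still sufficient --- in particular that the ladder $V_j=\overline{\mathrm{span}}\{\phi_{j,t}\}$ built from a Parseval frame of translates still satisfies the PMRA axioms $\bigcap_j V_j=\{0\}$ and $\overline{\bigcup_j V_j}=L^2(K)$, and that the correct replacement for condition (1) is the $\{0,1\}$-valued periodization rather than some other inequality. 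None of this is automatic, and your proposal does not supply it; it is precisely the content of the result in \cite{SMM19} that the paper is importing. Without either proving that intermediate characterization or quoting it exactly, the proof is a reduction to an unproved statement. The remaining bookkeeping (defining $m_0$ as the integral-periodic extension of $\mathds{1}_{\mathfrak{p}S}$, checking well-definedness from the disjointness in (a)) is fine as sketched.
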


\begin{defn}(Dimension function).
	Suppose $\Psi = \{\psi^m : m = 1, 2, \ldots, L\} \subset L^{2}(K)$ is a multiwavelet for $L^{2}(K)$. The dimension function of $\Psi$ is defined as $$D_{\Psi}(\xi) = \sum \limits_{m=1}^{L} \sum \limits_{j=1}^{\infty} \sum \limits_{t \in \mathbb{N}_{0}} |\widehat{\psi^{m}}(\mathfrak{p}^{-j}(\xi+u(t)))|^2, ~~~\text{for } \xi \in K \text{ a.e.}$$
\end{defn}

It is to be observe that if $\psi^1, \ldots, \psi^L \in L^{2}(K)$, then $$\int \limits_{\mathfrak{D}} \sum \limits_{j=1}^{\infty} \sum \limits_{t \in \mathbb{N}_{0}} |\widehat{\psi^{m}}(\mathfrak{p}^{-j}(\xi+u(t)))|^2 d\xi = \sum \limits_{j=1}^{\infty} q^{-j} \int \limits_{K}|\widehat{\psi^{m}}(\xi)|^2 d\xi .$$

Gripenberg \cite{G95} and Wang \cite{W95} independently proved that a wavelet is an MRA wavelet if and only if its dimension function is 1 a.e. in real setting.

\begin{thm}
A multiwavelet set $W= W_1 \cup \ldots \cup W_{q-1} \subset K$ induces an MRA if and only if $\bigcup \limits_{m=1}^{q-1} \bigcup \limits_{j=1}^{\infty} \bigcup \limits_{t \in \mathbb{N}_{0}} \{\mathfrak{p}^{j}W_{m} +u(t)\} = K$ a.e.
\end{thm}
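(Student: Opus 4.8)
The plan is to translate the hypothesis into a statement about the dimension function $D_\Psi$ of the multiwavelet $\Psi=\{\psi^m\}_{m=1}^{q-1}$ (with $\widehat{\psi^m}=\mathds{1}_{W_m}$), to exploit a measure identity already hinted at by the remark preceding the theorem, and then to match ``$D_\Psi\equiv1$'' with the MRA property through a candidate scaling set. Put $G:=\bigcup_{j\ge1}\bigcup_{m=1}^{q-1}\mathfrak{p}^{j}W_m$. Since $W$ is a multiwavelet set, Theorem~\ref{ps} makes $\{\mathfrak{p}^{j}W_m:j\in\mathbb{Z},\ m\}$ a measurable partition of $K$, so the sets defining $G$ are pairwise disjoint, $\mathds{1}_G=\sum_{j\ge1}\sum_m\mathds{1}_{\mathfrak{p}^{j}W_m}$, and, using $\{-u(t)\}_t=\{u(t)\}_t$ (Proposition~\ref{subgroup}(b)),
\[ D_\Psi(\xi)=\sum_{m}\sum_{j\ge1}\sum_{t\in\mathbb{N}_0}\mathds{1}_{\mathfrak{p}^{j}W_m+u(t)}(\xi)=\sum_{t\in\mathbb{N}_0}\mathds{1}_{G+u(t)}(\xi)\qquad\text{for a.e. }\xi. \]
Thus the union in the statement is $\bigcup_{t}(G+u(t))$, and it covers $K$ a.e.\ iff $D_\Psi\ge1$ a.e. The crucial point is that for a multiwavelet set this forces $D_\Psi\equiv1$: indeed $D_\Psi$ is integral periodic (Proposition~\ref{subgroup}(c)) and $\mathbb{N}_0$-valued, while the integral identity recorded just after the definition of $D_\Psi$, together with Plancherel, $\|\psi^m\|=1$, and the order being $q-1$, gives
\[ \int_{\mathfrak{D}}D_\Psi(\xi)\,d\xi=\sum_{m=1}^{q-1}\sum_{j\ge1}q^{-j}\|\psi^m\|^2=(q-1)\sum_{j\ge1}q^{-j}=(q-1)\cdot\tfrac{1}{q-1}=1=\mu(\mathfrak{D}). \]
Hence ``the union is $K$ a.e.'' $\iff$ $D_\Psi\ge1$ a.e.\ $\iff$ $D_\Psi\equiv1$ a.e.\ $\iff$ $\{G+u(t):t\in\mathbb{N}_0\}$ is a measurable partition of $K$, and it remains only to show that this last condition is equivalent to $W$ inducing an MRA (equivalently, to $G$ being a scaling set).

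For ``$W$ induces an MRA $\Rightarrow$ covering'', I would take an MRA $\{V_j\}$ whose multiwavelet is $\Psi$, with scaling function $\phi$. Since $\widehat{\psi^m_{j,t}}$ is supported in $\mathfrak{p}^{-j}W_m$, the wavelet subspace $V_{j+1}\ominus V_j$ has Fourier image inside $L^2\big(\bigcup_m\mathfrak{p}^{-j}W_m\big)$; peeling these off and using axiom (iii) yields $V_0=\overline{\bigoplus_{j\le-1}(V_{j+1}\ominus V_j)}$, hence $\widehat{V_0}\subseteq L^2(G)$ and $\mathrm{supp}\,\widehat\phi\subseteq G$ a.e. Orthonormality of $\{\phi(\cdot-u(t))\}$ makes the integral-periodic function $\sum_t|\widehat\phi(\cdot+u(t))|^2$ identically $1$, so a.e.\ $\xi$ has some translate $\xi+u(t)$ in $\mathrm{supp}\,\widehat\phi\subseteq G$, i.e.\ $\xi\in\bigcup_t(G+u(t))$, and the union is $K$ a.e.

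For the converse I would argue constructively. Assume the union is $K$ a.e.; by the first paragraph $\{G+u(t):t\}$ tiles $K$, so $G$ is a fundamental domain for $\Lambda:=\{u(t):t\in\mathbb{N}_0\}$ and $\mu(G)=\mu(\mathfrak{D})=1$. One checks directly that $G\subseteq\mathfrak{p}^{-1}G$ (since $\mathfrak{p}^{-1}G=G\cup\bigcup_m W_m$) and $\bigcup_{j\ge1}\mathfrak{p}^{-j}G=\bigcup_{l\in\mathbb{Z}}\bigcup_m\mathfrak{p}^{l}W_m=K$ a.e. Defining $V_j:=\mathcal{F}^{-1}\big(L^2(\mathfrak{p}^{-j}G)\big)$, these satisfy the MRA axioms: (i) from $G\subseteq\mathfrak{p}^{-1}G$, (ii) from $\bigcup_j\mathfrak{p}^{-j}G=K$ a.e., (iii) from $\mu(\mathfrak{p}^{j}G)\to0$, (iv) by construction, and (v) because $\widehat\phi:=\mathds{1}_G$ works, $\{\chi_{u(t)}|_G\}_t$ being an orthonormal basis of $L^2(G)$ as $G$ is a $\Lambda$-fundamental domain (the fundamental-domain version of the fact that $\{\chi_{u(t)}\}_t$ is an orthonormal basis of $L^2(\mathfrak{D})$). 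Finally $\widehat{V_1\ominus V_0}=L^2(\mathfrak{p}^{-1}G\setminus G)=L^2\big(\bigcup_m W_m\big)$, and since the $W_m$ are pairwise disjoint $\Lambda$-fundamental domains (both facts read off from \eqref{charofortho} at $j=0$), $\{\psi^m_{0,t}:m,t\}$ is an orthonormal basis of $V_1\ominus V_0$; so $\{V_j\}$ is an MRA with multiwavelet $\Psi$, i.e.\ $W$ induces an MRA.

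I expect the main obstacle to be the structural identification used in the ``only if'' direction — that the Fourier support of $V_{j+1}\ominus V_j$ is exactly $\bigcup_m\mathfrak{p}^{-j}W_m$ and that $V_0$ collapses to the closed sum of the negative-index wavelet subspaces — which requires a careful joint use of MRA axioms (ii)--(iii) together with the orthonormality of the characters indexed by $\Lambda$ over $\Lambda$-fundamental domains. Everything else (the rewriting of $D_\Psi$, the measure count forcing $D_\Psi\le1$, and checking the axioms for $\mathcal{F}^{-1}L^2(\mathfrak{p}^{-j}G)$) should be routine. Alternatively, if one is content to cite the local-field analogue of the Gripenberg--Wang theorem — that $\Psi$ is an MRA multiwavelet iff $D_\Psi\equiv1$ a.e. — then the second and third paragraphs are unnecessary and the statement follows at once from the first.
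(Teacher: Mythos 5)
Your proposal is correct, and at its core it follows the same route as the paper: the paper's entire argument is to cite the result of Behera--Jahan that a multiwavelet is an MRA multiwavelet if and only if $D_{\Psi}\equiv 1$ a.e., rewrite $\mathds{1}_{W_m}(\mathfrak{p}^{-j}(\xi+u(t)))$ as $\mathds{1}_{\mathfrak{p}^{j}W_m+u(t)}(\xi)$, and identify the condition $D_\Psi\equiv 1$ with the covering condition. This is exactly the ``alternative'' short path you mention at the end, so your second and third paragraphs (a from-scratch proof of the Gripenberg--Wang-type characterization on $K$) are extra work the paper does not undertake; they look sound but are not needed if one accepts the citation. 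The genuinely valuable difference is your first paragraph: the paper simply asserts the equivalence $\sum_{m,j,t}\mathds{1}_{\mathfrak{p}^{j}W_m+u(t)}(\xi)=1 \iff \bigcup_{m,j,t}(\mathfrak{p}^{j}W_m+u(t))=K$, but the covering condition only yields $D_\Psi\ge 1$ a.e.; one must still rule out overlaps. Your measure count
\[
\int_{\mathfrak{D}}D_\Psi(\xi)\,d\xi=\sum_{m=1}^{q-1}\sum_{j\ge1}q^{-j}\|\psi^{m}\|^{2}=(q-1)\cdot\frac{1}{q-1}=1=\mu(\mathfrak{D}),
\]
combined with the integral periodicity of $D_\Psi$, is precisely the missing step that upgrades $D_\Psi\ge1$ to $D_\Psi\equiv1$, so your version is actually tighter than the paper's on this point.
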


\begin{proof}
As $W$ is a multiwavelet set, there exists a multiwavelet $\Psi = \{\psi^m : m = 1, \ldots, q-1\}$ such that $\widehat{\psi^m} = \mathds{1}_{W_m}$.\\
We know from \cite{BJ15} that a multiwavelet is an MRA multiwavelet  if and only if $D_{\Psi} \equiv 1$ on $K$ a.e. Therefore for $\xi \in K$ a.e
\begin{eqnarray*}
&& \sum \limits_{m=1}^{q-1} \sum \limits_{j=1}^{\infty} \sum \limits_{t \in \mathbb{N}_{0}} \mathds{1}_{W_m}(\mathfrak{p}^{-j}(\xi+u(t))) =1 \\ 
&\iff& \sum \limits_{m=1}^{q-1} \sum \limits_{j=1}^{\infty} \sum \limits_{t \in \mathbb{N}_{0}} \mathds{1}_{\mathfrak{p}^{j}W_{m} +u(t)}(\xi) =1 \\ 
&\iff& \bigcup \limits_{m=1}^{q-1} \bigcup \limits_{j=1}^{\infty} \bigcup \limits_{t \in \mathbb{N}_{0}} \{\mathfrak{p}^{j}W_{m} +u(t)\} = K.
\end{eqnarray*}
This completes the proof.
\end{proof}

\section{Super-wavelet}
In this section we discuss different properties of super-wavelet and associated set.
 
 \begin{defn}(Super-wavelet).
 	Let $\Theta=\{\eta^{(1)}, \ldots, \eta^{(m)}\} \subset L^{2}(K)$ such that $\mathcal{W}(\Theta)$ is a Parseval multiframelet for $L^{2}(K)$. We call $\Theta$ a super-wavelet of length $m$ if $$\mathcal{B}(\Theta):=\{\bigoplus\limits_{i=1}^{m} q^{\frac{j}{2}}\eta^{(i)}(\mathfrak{p}^{-j}\cdot -u(t)) : j \in \mathbb{Z}, t \in \mathbb{N}_0\}$$ is an orthonormal basis for $\bigoplus \limits_{m}L^{2}(K)$. Each $\eta^{(i)}$ is called a component of the super-wavelet. Also, $\Theta$ is called a Parseval frame super-wavelet if $\mathcal{B}(\Theta)$ is a Parseval frame for $\bigoplus \limits_{m}L^{2}(K)$. 
 \end{defn}

\begin{defn}
	Two Parseval frame super-wavelets $(\eta^1, \ldots, \eta^m)$ and $(\zeta^1, \ldots, \zeta^n)$ are said to be equivalent if there exists a unitary operator $$U : \bigoplus \limits_{j=1}^m L^{2}(K) \to \bigoplus \limits_{j=1}^n L^{2}(K)$$ such that for all $k \in \mathbb{Z}$ and $l \in \mathbb{N}_0$, $$U(\eta_{k,l}^1 \oplus \cdots \oplus \eta_{k,l}^m) = \zeta_{k,l}^1 \oplus \cdots \oplus \zeta_{k,l}^n.$$
\end{defn}

\begin{defn}
	A Parseval framelet $\Upsilon$ is said to be a $n$-decomposable, where $n>1$, if $\Upsilon$ is equivalent to a Parseval frame super-wavelet of length $n$.
\end{defn}

\begin{prop}\cite{SM17}
	Let $(\psi_1, \ldots, \psi_m)$ and $(\varphi_1, \ldots, \varphi_n)$ be Parseval frame super-wavelet in $L^{2}(K)$. Then they are equivalent if and only if for $n \in \mathbb{N}_0$ and $\xi \in K$ a.e. \begin{equation} \label{eqiv1}
		\sum_{j=1}^{m} \sum_{t \in \mathbb{N}_0} \widehat{\psi_j} (\mathfrak{p}^{-n}(\xi +u(t))) \overline{\widehat{\psi_j} (\xi +u(t))} = \sum_{j=1}^{n} \sum_{t \in \mathbb{N}_0} \widehat{\varphi_j} (\mathfrak{p}^{-n}(\xi +u(t))) \overline{\widehat{\varphi_j} (\xi +u(t))}.
	\end{equation}
\end{prop}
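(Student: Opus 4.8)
The plan is to reduce the statement to the standard fact that two Parseval frames indexed by the same set are carried onto one another by a unitary operator precisely when their Gram matrices coincide, and then to evaluate those Gram matrices on the Fourier side, where they turn into the periodized products of \eqref{eqiv1}. For Step 1, recall that if $\{x_i\}_{i\in I}$ is a Parseval frame of a Hilbert space $H$, its analysis operator $C\colon H\to\ell^2(I)$, $Ch=(\langle h,x_i\rangle)_i$, is an isometry with $C^\ast C=\mathrm{Id}_H$, so $CC^\ast$ is the orthogonal projection of $\ell^2(I)$ onto the range of $C$ and has matrix $(\langle x_j,x_i\rangle)_{i,j\in I}$. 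Hence two Parseval frames $\{x_i\}_{i\in I}$ of $H_1$ and $\{y_i\}_{i\in I}$ of $H_2$ have equal Gram matrices if and only if $C_x$ and $C_y$ have the same range $M\subseteq\ell^2(I)$, in which case $U:=C_y^{-1}C_x\colon H_1\to H_2$ is a unitary with $Ux_i=y_i$ for all $i$; the converse is immediate since a unitary preserves inner products. Applying this with $I=\mathbb{Z}\times\mathbb{N}_0$ to the Parseval frames $\bigl\{\bigoplus_{i=1}^m(\psi_i)_{k,l}\bigr\}$ and $\bigl\{\bigoplus_{i=1}^n(\varphi_i)_{k,l}\bigr\}$, the two super-wavelets are equivalent if and only if $\sum_{i=1}^m\langle(\psi_i)_{k,l},(\psi_i)_{k',l'}\rangle=\sum_{i=1}^n\langle(\varphi_i)_{k,l},(\varphi_i)_{k',l'}\rangle$ for all $k,k'\in\mathbb{Z}$ and $l,l'\in\mathbb{N}_0$.

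For Step 2, I would fix a single $\psi\in L^2(K)$ and, writing $s=k'-k\ge0$, use Plancherel together with $\widehat{f_{j,t}}(\xi)=q^{-j/2}\chi_{u(t)}(-\mathfrak{p}^j\xi)\hat f(\mathfrak{p}^j\xi)$, the substitution $\zeta=\mathfrak{p}^{k'}\xi$, and $\mathfrak{p}^{-s}u(l)=u(lq^s)$ (a special case of $u(rq^k+s)=\mathfrak{p}^{-k}u(r)+u(s)$) to obtain
\[
\langle\psi_{k,l},\psi_{k+s,l'}\rangle=q^{s/2}\int_K\chi\bigl((u(l')-u(lq^s))\zeta\bigr)\,\hat\psi(\mathfrak{p}^{-s}\zeta)\,\overline{\hat\psi(\zeta)}\,d\zeta .
\]
Because $\{u(t)\}_{t\in\mathbb{N}_0}$ is a subgroup of $K^+$ over whose products $\chi$ is trivial — the mechanism already behind \eqref{charofortho} — and $K$ is the disjoint union of the sets $\mathfrak{D}+u(t)$, periodization over $\mathfrak{D}$ turns the integral into a Fourier coefficient (in the sense of the Preliminaries) of the integral-periodic function $\xi\mapsto\sum_{t\in\mathbb{N}_0}\hat\psi(\mathfrak{p}^{-s}(\xi+u(t)))\overline{\hat\psi(\xi+u(t))}$, the coefficient index being $u(l')-u(lq^s)$. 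For fixed $s$, as $(l,l')$ runs over $\mathbb{N}_0^2$ this index runs over all of $\{u(t)\}$, so the family $\{\langle\psi_{k,l},\psi_{k+s,l'}\rangle\}_{l,l'}$ is precisely the complete list of Fourier coefficients of that function; summing over the components $i=1,\dots,m$ replaces it by $\sum_{i=1}^m\sum_{t\in\mathbb{N}_0}\widehat{\psi_i}(\mathfrak{p}^{-s}(\xi+u(t)))\overline{\widehat{\psi_i}(\xi+u(t))}$, which is the left side of \eqref{eqiv1} with the $\mathbb{N}_0$-index $n$ there equal to $s$, and symmetrically for the $\varphi$-side.

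For Step 3, since for $k'<k$ the inner products are complex conjugates of those with $k'>k$, the identity in Step 1 holds for all pairs if and only if it holds for all $s:=k'-k\ge0$; by Step 2 and the completeness of the orthonormal system $\{\chi_{u(t)}\}_{t\in\mathbb{N}_0}$ on $\mathfrak{D}$, for each fixed $s\ge0$ this is equivalent to the a.e.\ equality on $\mathfrak{D}$ — hence, by integral periodicity, on $K$ — of the two periodized functions, which is exactly \eqref{eqiv1}; letting $s$ range over $\mathbb{N}_0$ finishes the proof. The main obstacle is Step 2: one must run the change of variables carefully, justify the periodization (which rests on $\chi$ annihilating all products $u(s)u(t)$, the positive-characteristic analogue of $\mathbb{Z}$ being self-dual in $\mathbb{R}$) and the $L^1$-convergence of the periodized product $\hat\psi(\mathfrak{p}^{-s}\cdot)\overline{\hat\psi(\cdot)}$, and above all keep the index bookkeeping straight so that the doubly-indexed family of Gram entries corresponds bijectively to the full set of Fourier coefficients — it is this bijection that upgrades ``all Gram entries agree'' to ``the periodized functions agree a.e.'' The reduction to $s\ge0$ and the passage to the direct sum over components are routine.
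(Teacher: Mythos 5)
The paper does not prove this proposition: it is quoted verbatim from \cite{SM17} and used as a black box, so there is no in-paper argument to compare against. Your proposal is the standard (and, as far as I can tell, correct) route, and it is essentially the argument of the cited source: reduce unitary equivalence of two Parseval frames with a common index set to equality of their Gram matrices via the analysis operators, compute the Gram entries $q^{s/2}\int_K\chi\bigl((u(l')-u(lq^s))\zeta\bigr)\hat\psi(\mathfrak{p}^{-s}\zeta)\overline{\hat\psi(\zeta)}\,d\zeta$ on the Fourier side, and identify them with the Fourier coefficients on $\mathfrak{D}$ of the integral-periodic functions appearing in \eqref{eqiv1}. The two facts you flag as needing care --- that $\chi(u(r)u(t))=1$ for all $r,t\in\mathbb{N}_0$ (so that periodization over the cosets $\mathfrak{D}+u(t)$ is legitimate) and that $\hat\psi(\mathfrak{p}^{-s}\cdot)\overline{\hat\psi(\cdot)}\in L^1(K)$ by Cauchy--Schwarz (so that the periodized series converges in $L^1(\mathfrak{D})$ and is determined a.e.\ by its Fourier coefficients) --- are both standard in this setting and are exactly the right points to check, so I see no gap.
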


\noindent Now considering $\widehat{\psi_j} = \mathds{1}_{W_j}$ and $\widehat{\varphi_j} = \mathds{1}_{V_j}$ for some measurable $W_j , V_j \subset K$, the equation \eqref{eqiv1} changes to 
\begin{eqnarray*}
	&&\sum_{j=1}^{m} \sum_{t \in \mathbb{N}_0} \mathds{1}_{[\mathfrak{p}^{n}W_j +u(t)]\cap [W_j +u(t)]}(\xi) = \sum_{j=1}^{n} \sum_{t \in \mathbb{N}_0} \mathds{1}_{[\mathfrak{p}^{n}V_j +u(t)]\cap [V_j +u(t)]} (\xi)\\
	&\Rightarrow& \bigcup \limits_{j=1}^m \bigcup \limits_{t \in \mathbb{N}_0} [\mathfrak{p}^{n}W_j +u(t)]\cap [W_j +u(t)] = \bigcup \limits_{j=1}^n \bigcup \limits_{t \in \mathbb{N}_0} [\mathfrak{p}^{n}V_j +u(t)]\cap [V_j +u(t)]
\end{eqnarray*}

Shukla and Maury \cite{SM17} gave following necessary condition for a Parseval framelet to be decomposable.

\begin{prop}
	If $\psi$ is a $m$-decomposable Parseval framelet then \begin{equation} \label{eq1}
	\int \limits_{\mathfrak{D}} \frac{\sum \limits_{t \in \mathbb{N}_{0}} |\hat{\psi}(\xi +u(t))|^2}{|\xi|^2} d\xi \geq m \frac{q-1}{q}.
	\end{equation}
\end{prop}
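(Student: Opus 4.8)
The plan is to reduce the inequality to a componentwise statement about ordinary Parseval framelets and then to invoke Theorem~\ref{eqiv}(3) together with an elementary estimate coming from the ultrametric on $K$.

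First I would unwind the hypothesis. For a single function the direct sum is trivial, so $\mathcal{B}(\{\psi\})=\mathcal{W}(\{\psi\})$; hence a Parseval framelet $\{\psi\}$ may legitimately be regarded as a Parseval frame super-wavelet of length $1$. The assumption that $\psi$ is $m$-decomposable then supplies a Parseval frame super-wavelet $\Theta=\{\eta^{(1)},\dots,\eta^{(m)}\}$ of length $m$ which is equivalent to $(\psi)$. Applying the equivalence Proposition (the one yielding \eqref{eqiv1}) to the pair $(\psi)$ and $\Theta$, and taking $n=0$ in \eqref{eqiv1} (so that $\mathfrak{p}^{-n}$ becomes the identity), gives the pointwise identity
\[
\sum_{t\in\mathbb{N}_{0}}|\hat\psi(\xi+u(t))|^{2}=\sum_{j=1}^{m}\sum_{t\in\mathbb{N}_{0}}|\widehat{\eta^{(j)}}(\xi+u(t))|^{2}\qquad\text{for a.e. }\xi\in K .
\]

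Next I would show that each component is itself a Parseval framelet. Testing the Parseval frame identity for $\mathcal{B}(\Theta)$ in $\bigoplus_{m}L^{2}(K)$ against a vector whose only nonzero entry sits in the $j$-th slot yields $\sum_{k,l}|\langle f,\eta^{(j)}_{k,l}\rangle|^{2}=\|f\|^{2}$ for all $f\in L^{2}(K)$, so $\mathcal{W}(\{\eta^{(j)}\})$ is a Parseval framelet, in particular a besselet with bound $1$. By the equivalence of (1) and (3) in Theorem~\ref{eqiv}, applied with $L=1$,
\[
\int_{K}\frac{|\widehat{\eta^{(j)}}(\xi)|^{2}}{|\xi|}\,d\xi=\frac{q-1}{q}\qquad(j=1,\dots,m).
\]
Splitting $K=\bigsqcup_{t\in\mathbb{N}_{0}}(\mathfrak{D}+u(t))$ and translating each piece back to $\mathfrak{D}$ gives
\[
\frac{q-1}{q}=\int_{\mathfrak{D}}\sum_{t\in\mathbb{N}_{0}}\frac{|\widehat{\eta^{(j)}}(\xi+u(t))|^{2}}{|\xi+u(t)|}\,d\xi .
\]
For $\xi\in\mathfrak{D}$ one has $|\xi|\le 1$, while for $t\ge 1$ Proposition~\ref{subgroup}(a) gives $|u(t)|=q^{k}\ge q$, so the ultrametric inequality forces $|\xi+u(t)|=|u(t)|>|\xi|$; thus $|\xi+u(t)|^{-1}\le|\xi|^{-1}$ for every $t\in\mathbb{N}_{0}$, with equality at $t=0$. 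Hence $\tfrac{q-1}{q}\le\int_{\mathfrak{D}}|\xi|^{-1}\sum_{t}|\widehat{\eta^{(j)}}(\xi+u(t))|^{2}\,d\xi$ for each $j$; summing over $j=1,\dots,m$ and using the identity from the previous paragraph gives $\int_{\mathfrak{D}}|\xi|^{-1}\sum_{t}|\hat\psi(\xi+u(t))|^{2}\,d\xi\ge m\,\tfrac{q-1}{q}$. Finally, since $|\xi|^{2}\le|\xi|$ on $\mathfrak{D}$, replacing $|\xi|$ by $|\xi|^{2}$ only enlarges the left-hand side, which yields \eqref{eq1}.

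I expect the main obstacle to be the bookkeeping in the two reductions rather than the analysis: one must justify carefully that a Parseval framelet plays the role of a length-$1$ Parseval frame super-wavelet (so that the equivalence Proposition is applicable), and that restricting the direct-sum Parseval frame to coordinate-supported test vectors really produces a Parseval framelet in each slot — here the cross terms drop out automatically for such vectors, which is what keeps the argument clean. Once these are in place, the analytic content is just Theorem~\ref{eqiv}(3) applied componentwise plus the elementary bound $|\xi+u(t)|\ge|\xi|$ on $\mathfrak{D}$; all sums have nonnegative terms, so no convergence or interchange subtleties arise.
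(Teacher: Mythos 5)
The paper does not actually prove this proposition: it is quoted from \cite{SM17} and stated without argument, so there is no internal proof to compare yours against. Your argument is, however, a correct and self-contained derivation from the other results recorded in the paper. The three pillars all check out. First, a Parseval framelet is legitimately a length-one Parseval frame super-wavelet, so the equivalence criterion \eqref{eqiv1} applies to the pair $(\psi)$, $\Theta$; taking the dilation index equal to $0$ there (note the clash of notation in \eqref{eqiv1}, where $n$ denotes both the length of the second tuple and the dilation parameter --- you resolve it the right way) gives $\sum_{t}|\hat\psi(\xi+u(t))|^{2}=\sum_{j}\sum_{t}|\widehat{\eta^{(j)}}(\xi+u(t))|^{2}$ a.e. Second, restricting the Parseval identity for $\mathcal{B}(\Theta)$ to vectors supported in a single coordinate does show that each $\mathcal{W}(\{\eta^{(j)}\})$ is a Parseval framelet, so Theorem \ref{eqiv}(3) applies componentwise and contributes $(q-1)/q$ per component; this is exactly where the factor $m$ comes from, and it could not be extracted from the weaker statement that the union system $\mathcal{W}(\Theta)$ is a Parseval multiframelet in $L^{2}(K)$, which only yields a total of $(q-1)/q$. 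Third, the unfolding $K=\bigsqcup_{t}(\mathfrak{D}+u(t))$, the bound $|\xi+u(t)|\ge|\xi|$ for $\xi\in\mathfrak{D}$ (equality at $t=0$, and $|\xi+u(t)|=|u(t)|\ge q$ for $t\ge1$ by Proposition \ref{subgroup} and the ultrametric equality case), and $|\xi|^{2}\le|\xi|$ on $\mathfrak{D}$ are all legitimate; every interchange of sum and integral is covered by Tonelli since all terms are nonnegative. In short, the proof is complete and supplies a derivation the paper itself omits.
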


In view of $\hat{\psi} = \mathds{1}_{W}$, equation \eqref{eq1} changes to $$\int \limits_{\mathfrak{D}} \frac{\sum \limits_{t \in \mathbb{N}_{0}} \mathds{1}_{W+u(t)}(\xi)}{|\xi|^2} d\xi \geq m \frac{q-1}{q}.$$

\section*{Acknowledgment}
The author is highly indebted to the fiscal support of Ministry of Human Resource Development (M.H.R.D.), Government of India.



\begin{thebibliography}{}
  \bibitem{A95} P. Auscher : Solution of two problems on wavelets, J. Geom. Anal., 5(2), 181 -- 236 (1995).
  
  \bibitem{BRS01} M. Bownik, R.  Rzeszotnik, D. Speegle : A Characterization of Dimension Functions of Wavelets, Applied and Computational Harmonic Analysis, 10(1), 71 --  92 (2001).
  
  \bibitem{BJ15} B. Behera, Q. Jahan : Characterization of wavelets and MRA wavelets on local fields of positive characteristic, Collect. Math., 66(1), 33 -- 53 (2015).
  
  \bibitem{SMM19} N. K. Shukla, S .C. Maury $\&$ S. Mittal : Semi-orthogonal Parseval Wavelets Associated with GMRAs on Local Fields of Positive Characteristic, Mediterr. J. Math.,  16(5), 120 (2019). 
  
  \bibitem{BJ12} B. Behera, Q. Jahan : Multiresolution analysis on local fields and characterization of scaling functions.
  Adv. Pure Appl. Math., 3(2), 181 -- 202 (2012)
  
  \bibitem{SM17} N. K. Shukla, S. C. Maury : Super-wavelets on local fields of positive characteristic, Mathematische Nachrichten, 291(4), 704 -- 719 (2018).
  
  \bibitem{RV99} D. Ramakrishnan, R. J. Valenza : Fourier Analysis on Number Fields, Springer-Verlag, New York, 1999.
  
  \bibitem{T75} M.H. Taibleson : Fourier analysis on Local Fields, University of Tokyo Press, Tokyo, 1975.

  \bibitem{JLJ04} H. Jiang, D. Li $\&$ N. Jin : Multiresolution analysis on local fields, J. Math. Anal. Appl., 294(2), 523 -- 532 (2004).
  
  \bibitem{Z97} S. Zheng : Riesz type kernels over the ring of integers of a local field, J. Math. Anal. Appl., 208(2), 528 -- 552 (1997).
  
  \bibitem{BB04} J. J. Benedetto, R. L. Benedetto : A wavelet theory for local fields and related groups, J. Geom. Anal., 14(3), 423 -- 456 (2004).
  
  \bibitem{CM11} B. Currey, A. Mayeli : Gabor fields and wavelet sets for the Heisenberg group, Monatsh. Math., 162(2), 119 -- 142 (2011).
  
  \bibitem{B04} R. L. Benedetto : Examples of wavelets for local fields, Wavelets, frames and operator theory, Contemp. Math., 345, 27 -- 47 (2004).
  
  \bibitem{G95} G. Gripenberg : A necessary and sufficient condition for the existence of a father wavelet. Stud. Math., 114, 207–226 (1995).
  
  \bibitem{W95} X. Wang : The study of wavelets from the properties of their Fourier transform, PhD. Thesis, Washington University, 1995.
  

\end{thebibliography}

\end{document}